\documentclass[numbers=enddot,12pt,final,onecolumn,notitlepage]{scrartcl}%
\usepackage[headsepline,footsepline,manualmark]{scrlayer-scrpage}
\usepackage{amsfonts}
\usepackage{amssymb}
\usepackage{amsmath}
\usepackage{amsthm}
\usepackage{framed}
\usepackage{comment}
\usepackage{color}
\usepackage[breaklinks=true]{hyperref}
\usepackage[sc]{mathpazo}
\usepackage[T1]{fontenc}
\usepackage{color}
\usepackage{needspace}
\usepackage{tabls}
\providecommand{\U}[1]{\protect\rule{.1in}{.1in}}
\theoremstyle{definition}
\newtheorem{theo}{Theorem}[section]
\newenvironment{theorem}[1][]
{\begin{theo}[#1]\begin{leftbar}}
{\end{leftbar}\end{theo}}
\newtheorem{lem}[theo]{Lemma}
\newenvironment{lemma}[1][]
{\begin{lem}[#1]\begin{leftbar}}
{\end{leftbar}\end{lem}}
\newtheorem{prop}[theo]{Proposition}
\newenvironment{proposition}[1][]
{\begin{prop}[#1]\begin{leftbar}}
{\end{leftbar}\end{prop}}
\newtheorem{defi}[theo]{Definition}

\newtheorem{remk}[theo]{Remark}

\newtheorem{coro}[theo]{Corollary}

\newtheorem{conv}[theo]{Convention}

\newtheorem{quest}[theo]{Question}
\newenvironment{question}[1][]
{\begin{quest}[#1]\begin{leftbar}}
{\end{leftbar}\end{quest}}
\newtheorem{warn}[theo]{Warning}

\newtheorem{conj}[theo]{Conjecture}

\newtheorem{exam}[theo]{Example}

\newenvironment{statement}{\begin{quote}}{\end{quote}}

\newcommand{\arinj}{\ar@{_{(}->}}
\newcommand{\arinjrev}{\ar@{^{(}->}}
\newcommand{\arsurj}{\ar@{->>}}
\newcommand{\arelem}{\ar@{|->}}
\newcommand{\arback}{\ar@{<-}}

\let\sumnonlimits\sum
\let\prodnonlimits\prod
\let\cupnonlimits\bigcup
\let\capnonlimits\bigcap
\renewcommand{\sum}{\sumnonlimits\limits}
\renewcommand{\prod}{\prodnonlimits\limits}
\renewcommand{\bigcup}{\cupnonlimits\limits}
\renewcommand{\bigcap}{\capnonlimits\limits}

\newenvironment{noncompile}{}{}
\excludecomment{noncompile}
\ihead{Triple parabolic cosets
}\ohead{page \thepage}
\begin{document}

\title{Triple parabolic cosets have unique Bruhat minima}
\date{\today}
\author{Mikey Becht, Darij Grinberg}
\maketitle

\begin{abstract}
\textbf{Abstract.} A folklore result says that any parabolic double coset
$W_{I} w W_{J}$ in a Coxeter group $W$ is an interval of the Bruhat order,
provided that $W_{I}$ and $W_{J}$ are finite. We include a self-contained
proof, without assuming that $W$ is finite. Furthermore, we prove that each
``triple coset'' $W_{I} u W_{J} v W_{K}$ has a Bruhat minimum and -- if
$W_{I}, W_{J}, W_{K}$ are finite -- a Bruhat maximum, but (in general) is not
a Bruhat interval. This proves and extends a conjecture on MathOverflow \#472072.

\end{abstract}

\section{Reminders on the strong Bruhat order}

Let $\left(  W,S\right)  $ be a Coxeter system. We refer to standard texts
such as \cite{BjoBre}, \cite{Davis}, \cite{Schremmer}, \cite{Lusztig},
\cite{GecPfi00} for definitions and properties of Coxeter groups. The length
of an element $w\in W$ will be denoted by $\ell\left(  w\right)  $.

We shall use the \emph{strong Bruhat order} on $W$. This is the partial order
on $W$ that is called the \textquotedblleft Bruhat order\textquotedblright\ in
\cite{BjoBre}, and that is defined but left nameless in \cite[\S 2.1]%
{Lusztig}. One of the many ways to define it is as follows:

\begin{proposition}
\label{prop.bruhat-redex2}Let $u,v\in W$ be two elements. Let $s_{1}%
s_{2}\cdots s_{m}$ be a reduced expression for $v$. Then, $u\leq v$ (in the
strong Bruhat order) if and only if $u$ can be written as a subproduct\ of
this reduced expression (i.e., if there exist $i_{1}<i_{2}<\cdots<i_{k}$ in
$\left\{  1,2,\ldots,m\right\}  $ such that $u=s_{i_{1}}s_{i_{2}}\cdots
s_{i_{k}}$).
\end{proposition}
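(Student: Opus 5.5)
This is the subword property. I take as the definition of the strong Bruhat order the transitive closure of the relations $x < xt$ for reflections $t$ (conjugates of elements of $S$) with $\ell\left(x\right) < \ell\left(xt\right)$. I will use two standard tools: the strong exchange condition, and the fact that $\ell\left(xt\right) \neq \ell\left(x\right)$ for every reflection $t$. The proof has two directions.

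\emph{``Only if''.} By transitivity, it suffices to handle a single covering-type step. Suppose $u = vt$ with $t$ a reflection and $\ell\left(u\right) < \ell\left(v\right)$. By the strong exchange condition applied to the reduced expression $s_1 s_2 \cdots s_m$, there is an index $i$ with $u = s_1 \cdots \widehat{s_i} \cdots s_m$, so $u$ is a subproduct. For a chain $u = x_0 < x_1 < \cdots < x_r = v$, I induct on $r$. The element $x_{r-1}$ is a subproduct $s_1 \cdots \widehat{s_i} \cdots s_m$, which may fail to be reduced. I first delete letters from it to obtain a reduced subword representing $x_{r-1}$; this is possible by the deletion condition. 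The induction hypothesis, applied to this shorter reduced word, then shows that $u$ is a subproduct of it, hence of the original word.

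\emph{``If''.} I prove this by induction on $m$. Suppose $u = s_{i_1} \cdots s_{i_k}$ is a subproduct, and let $v' = s_1 \cdots s_{m-1}$, which is reduced with $v' < v$.
\begin{itemize}
\item If $i_k \neq m$, then $u$ is a subproduct of the reduced word for $v'$. By induction $u \leq v' < v$.
\item If $i_k = m$, then $u = u' s_m$ with $u'$ a subproduct of the word for $v'$, so $u' \leq v'$ by induction. It remains to show $u' s \leq v' s = v$, where $s = s_m$ and $\ell\left(v' s\right) > \ell\left(v'\right)$.
\end{itemize}

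This last step is the main obstacle. It is the ``lifting property'': if $x \leq y$, $s \in S$, and $ys > y$, then $xs \leq ys$. I would prove it by reducing to a single relation $x < y = xt$. If $xs > x$, I conjugate: $xs \cdot \left(sts\right) = xts = ys$, so $xs$ and $ys$ differ by the reflection $sts$. Comparing lengths, either $xs < ys$ directly, or $\ell\left(xs\right) > \ell\left(ys\right)$. In the latter case $t = s$ is forced, hence $xs = y < ys$. If $xs < x$, then $xs < x \leq y < ys$. The chain version follows by transitivity, applying the single-step argument along the chain and using the case distinction on each link.

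One must take care that the hypothesis $ys > y$ only holds at the top of the chain and need not propagate to intermediate elements. For this reason I would prove the more robust statement ``$x \leq y \Rightarrow xs \leq y$ or $xs \leq ys$'', by induction on the length of the chain. Combined with $y < ys$, it gives $xs \leq ys$.
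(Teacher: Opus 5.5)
Your architecture is the standard proof behind the references the paper cites instead of proving this (essentially Björner--Brenti's Lemma 2.2.1 and Theorem 2.2.2): strong exchange plus induction along the chain for ``only if'', and induction on $m$ plus a lifting lemma for ``if''. The ``only if'' direction is fine. So is the reduction to the robust statement ``$x\leq y$ implies $xs\leq y$ or $xs\leq ys$'', and so is the chain induction for it.

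\textbf{The gap} is in the single-link case of the robust statement, which is where all the real work lies. You wrote the single-link argument under the hypothesis $ys>y$. There the case $\ell(xs)>\ell(ys)$ cannot occur at all, since $\ell(xs)=\ell(x)+1\leq\ell(y)<\ell(ys)$. Your conclusion ``$t=s$, hence $xs=y<ys$'' is also self-contradictory, because $t=s$ gives $ys=x<y$. Once you drop $ys>y$, as your chain induction forces you to, this case genuinely occurs (e.g.\ $y=xs>x$). The conclusion you then need is $xs\leq y$, and the only support offered is the bare assertion ``$t=s$ is forced''. That assertion is exactly the nontrivial content of the lifting lemma, so it needs a proof.

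\textbf{The fix.} The assertion is true and follows from the tools you named. Let $b_1\cdots b_k$ be a reduced word for $x$. Since $\ell(xs)=k+1$, the word $b_1\cdots b_k s$ is reduced for $xs$. As $ys=xs\cdot(sts)$ and $\ell(ys)<\ell(xs)$, strong exchange says $ys$ is obtained by deleting one letter from this word. If that letter were some $b_i$, then $y=(ys)s=b_1\cdots\widehat{b_i}\cdots b_k$ would have length less than $k=\ell(x)<\ell(y)$, which is absurd. So the deleted letter is the final $s$, whence $ys=x$, i.e.\ $xs=y\leq y$. With this inserted, the proof is complete.

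Incidentally, the robust statement applied to $u'\leq v'$ already gives $u=u's\leq v'<v$ or $u=u's\leq v's=v$. So you never need the hypothesis $v's>v'$ at all.
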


This indeed can be used as a definition of $\leq$ (that is, the meaning of
$u\leq v$ is independent of the choice of the reduced expression) and is
equivalent to the other definitions of the strong Bruhat order; see
\cite[Proposition 2.4 (iii)]{Lusztig} or \cite[Theorem 2.2.2]{BjoBre} or
\cite[\S 5.10]{Humphreys} for proofs.

Henceforth, inequality signs such as $\leq$, $<$, $\geq$ and $>$ will be
understood as referring to the strong Bruhat order whenever they appear
between elements of $W$. We recall some properties of the strong Bruhat order:

\begin{proposition}
\label{prop.bruhat.props1}Let $u,v\in W$. Then:

\begin{enumerate}
\item[\textbf{(a)}] If $u\leq v$, then $\ell\left(  u\right)  \leq\ell\left(
v\right)  $.

\item[\textbf{(b)}] If $u<v$, then $\ell\left(  u\right)  <\ell\left(
v\right)  $.

\item[\textbf{(c)}] We have $u\leq v$ if and only if $u^{-1}\leq v^{-1}$.

\item[\textbf{(d)}] We have $u<v$ if and only if $u^{-1}<v^{-1}$.
\end{enumerate}
\end{proposition}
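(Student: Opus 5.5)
We will derive all four parts directly from the subword characterization in Proposition \ref{prop.bruhat-redex2}, using only elementary facts about reduced expressions. The standing fact we need is that for any $w \in W$, the length $\ell\left(w\right)$ is the smallest $k$ such that $w$ is a product of $k$ elements of $S$. Also, a reduced expression for $w$ reversed is a reduced expression for $w^{-1}$, since $s^{-1}=s$ for $s\in S$ and $\ell\left(w^{-1}\right)=\ell\left(w\right)$.

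For \textbf{(a)}, let $m=\ell\left(v\right)$ and fix a reduced expression $s_1 s_2\cdots s_m$ of $v$. If $u\leq v$, then Proposition \ref{prop.bruhat-redex2} writes $u=s_{i_1}\cdots s_{i_k}$ with $k\leq m$. Hence $\ell\left(u\right)\leq k\leq m=\ell\left(v\right)$.

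For \textbf{(c)}, suppose $u\leq v$ and write $u=s_{i_1}\cdots s_{i_k}$ as a subproduct of the reduced expression $s_1\cdots s_m$ of $v$. Then $s_m s_{m-1}\cdots s_1$ is a reduced expression for $v^{-1}$. Moreover, $u^{-1}=s_{i_k}\cdots s_{i_1}$ is a subproduct of it, taken at the reversed positions. By Proposition \ref{prop.bruhat-redex2} applied to this reduced expression (which we may do, since the proposition holds for any reduced expression), we get $u^{-1}\leq v^{-1}$. The converse follows by applying this to $u^{-1},v^{-1}$, using $\left(u^{-1}\right)^{-1}=u$. Part \textbf{(d)} then follows at once from \textbf{(c)}, because inversion is a bijection, so $u\neq v$ if and only if $u^{-1}\neq v^{-1}$.

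The only step with content is \textbf{(b)}, and this is where we expect the main obstacle. Given $u<v$, part \textbf{(a)} gives $\ell\left(u\right)\leq\ell\left(v\right)$, so we must rule out equality. Suppose $\ell\left(u\right)=\ell\left(v\right)=m$. Then any subproduct representation $u=s_{i_1}\cdots s_{i_k}$ has $k\geq \ell\left(u\right)=m$. Since the indices $i_1<\cdots<i_k$ lie in $\left\{1,\ldots,m\right\}$, this forces $k=m$ and $i_j=j$ for all $j$, so $u=s_1\cdots s_m=v$. This contradicts $u\neq v$. So the key point is the minimality of length among all products of generators, which is the definition of $\ell$.
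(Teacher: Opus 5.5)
Your proposal is correct and follows essentially the same route as the paper. Parts \textbf{(a)} and \textbf{(b)} come from the bound $\ell\left(s_{i_1}\cdots s_{i_k}\right)\leq k$ for a subproduct of a reduced expression of $v$; your contradiction argument for \textbf{(b)} just makes explicit that a subproduct with $k=m$ must be $v$ itself. Parts \textbf{(c)} and \textbf{(d)} come from reversing a reduced expression of $v$ to get one of $v^{-1}$, exactly as in the paper.
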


\begin{proof}
All four parts follow easily from Proposition \ref{prop.bruhat-redex2}. (For
parts \textbf{(a)} and \textbf{(b)}, use the trivial observation that
$\ell\left(  s_{i_{1}}s_{i_{2}}\cdots s_{i_{k}}\right)  \leq k$ for any
$i_{1}<i_{2}<\cdots<i_{k}$. For parts \textbf{(c)} and \textbf{(d)}, observe
that if $s_{1}s_{2}\cdots s_{m}$ is a reduced expression for some $w\in W$,
then $s_{m}s_{m-1}\cdots s_{1}$ is a reduced expression for $w^{-1}$.)
\end{proof}

For the next proposition, we recall the notion of a reflection. The elements
of $S$ are called the \emph{simple reflections} (of $W$). The elements of $W$
that are conjugate to simple reflections are called the \emph{reflections} (of
$W$). Commonly, the set of all reflections is denoted by $T$ (although we have
no need for this notation).

\begin{proposition}
\label{prop.bruhat.reflec} Let $t\in W$ be a reflection. Let $w\in W$ be any
element. Then:

\begin{enumerate}
\item[\textbf{(a)}] If $\ell\left(  w\right)  <\ell\left(  tw\right)  $, then
$w<tw$.

\item[\textbf{(b)}] If $\ell\left(  w\right)  <\ell\left(  wt\right)  $, then
$w<wt$.
\end{enumerate}
\end{proposition}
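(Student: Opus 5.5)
We will use Proposition \ref{prop.bruhat-redex2} as the definition of $\leq$, and derive part \textbf{(a)} from the strong exchange condition. Part \textbf{(b)} will then follow from part \textbf{(a)} by inversion.

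\textbf{Part (a).} Assume $\ell\left(w\right) < \ell\left(tw\right)$ and set $v := tw$. Then $\ell\left(tv\right) = \ell\left(w\right) < \ell\left(v\right)$. Let $s_1 s_2 \cdots s_m$ be a reduced expression for $v$. The strong exchange condition (see \cite[Theorem 1.4.3]{BjoBre}) says: if $t$ is a reflection with $\ell\left(tv\right) < \ell\left(v\right)$, then there is an index $i \in \left\{1,2,\ldots,m\right\}$ with
\[
tv = s_1 \cdots s_{i-1} s_{i+1} \cdots s_m .
\]
Thus $w = tv$ is a subproduct of the reduced expression $s_1 s_2 \cdots s_m$ of $v$. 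Proposition \ref{prop.bruhat-redex2} then gives $w \leq v = tw$. Moreover $w \neq tw$, since $\ell\left(w\right) \neq \ell\left(tw\right)$. Hence $w < tw$.

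\textbf{Part (b).} Assume $\ell\left(w\right) < \ell\left(wt\right)$. Since length is invariant under inversion, this says $\ell\left(w^{-1}\right) < \ell\left(t w^{-1}\right)$, because $\left(wt\right)^{-1} = t^{-1} w^{-1} = t w^{-1}$; here we use that a reflection satisfies $t^{-1} = t$, being conjugate to an involution. Part \textbf{(a)}, applied to $w^{-1}$, yields $w^{-1} < t w^{-1} = \left(wt\right)^{-1}$. Proposition \ref{prop.bruhat.props1} \textbf{(d)} now gives $w < wt$.

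\textbf{Main obstacle.} The only nontrivial input is the strong exchange condition, which is where the hypothesis that $t$ is a reflection (and not just a simple reflection) is really used. Since the paper cites standard texts for the foundations, I would quote it from \cite{BjoBre} or \cite{Humphreys} rather than reprove it. If a self-contained argument were required, I would prove it via the root-system or reflection-cocycle characterization: $\ell\left(tv\right) < \ell\left(v\right)$ if and only if $t$ appears among the reflections $s_1 \cdots s_{i-1} s_i s_{i-1} \cdots s_1$ associated to the reduced word. Taking $t = s_1 \cdots s_{i-1} s_i s_{i-1} \cdots s_1$ then gives the deletion directly.
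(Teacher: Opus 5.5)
Your proof is correct and is essentially the paper's own argument. For part \textbf{(a)}, the paper's second proof uses Lusztig's Lemma 2.2 to write $t = s_1 s_2 \cdots s_i \cdots s_2 s_1$, then reads off $w = tu$ as a subproduct of a reduced word for $u = tw$; this is exactly the strong exchange condition you invoke. For part \textbf{(b)}, the paper uses the same inversion argument via Proposition \ref{prop.bruhat.props1} \textbf{(d)}.
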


\begin{proof}
\textbf{(a)} This follows immediately from the definition of the strong Bruhat
order given in \cite[Definition 2.1.1]{BjoBre}. Indeed, $w^{-1}tw$ is
conjugate to the reflection $t$, and thus itself a reflection; hence,
$\ell\left(  w\right)  <\ell\left(  tw\right)  $ entails $w\overset{w^{-1}%
tw}{\rightarrow}tw$ in the language of \cite[Definition 2.1.1]{BjoBre}, and
therefore $w<tw$. This proves Proposition \ref{prop.bruhat.reflec}
\textbf{(a)}.

Alternatively, Proposition \ref{prop.bruhat.reflec} \textbf{(a)} can also be
proved using the definition of the strong Bruhat order given in \cite[\S 2.1]%
{Lusztig}. Namely, we observe that $t^{2}=1$ (since any reflection is an
involution). Set $u:=tw$, so that $tu=\underbrace{tt}_{=t^{2}=1}w=w$. Pick any
reduced expression $u=s_{1}s_{2}\cdots s_{q}$ of $u$. Assume that $\ell\left(
w\right)  <\ell\left(  tw\right)  $. Since $tu=w$, we now have $\ell\left(
tu\right)  =\ell\left(  w\right)  <\ell\left(  tw\right)  =\ell\left(
u\right)  $ (since $tw=u$). Hence, \cite[Lemma 2.2, (iii) $\Longrightarrow$
(ii)]{Lusztig} (applied to $u$ instead of $w$) yields that $t=s_{1}s_{2}\cdots
s_{i}\cdots s_{2}s_{1}$ for some $i\in\left\{  1,2,\ldots,q\right\}  $.
Consider this $i$. Now,%
\begin{align*}
w  &  =\underbrace{t}_{\substack{=s_{1}s_{2}\cdots s_{i}\cdots s_{2}%
s_{1}\\=\left(  s_{1}s_{2}\cdots s_{i-1}\right)  \left(  s_{i}s_{i-1}\cdots
s_{1}\right)  }}\underbrace{u}_{\substack{=s_{1}s_{2}\cdots s_{q}\\=\left(
s_{1}s_{2}\cdots s_{i}\right)  \left(  s_{i+1}s_{i+2}\cdots s_{q}\right)  }}\\
&  =\left(  s_{1}s_{2}\cdots s_{i-1}\right)  \underbrace{\left(  s_{i}%
s_{i-1}\cdots s_{1}\right)  \cdot\left(  s_{1}s_{2}\cdots s_{i}\right)
}_{\substack{=1\\\text{(since }s_{1},s_{2},\ldots,s_{i}\text{ are
involutions)}}}\left(  s_{i+1}s_{i+2}\cdots s_{q}\right) \\
&  =s_{1}s_{2}\cdots s_{i-1}s_{i+1}s_{i+2}\cdots s_{q}.
\end{align*}
Since this is clearly a subproduct of the reduced expression $s_{1}s_{2}\cdots
s_{q}$ of $u$, we thus conclude (by Proposition \ref{prop.bruhat-redex2}) that
$w\leq u$. Thus, $w\leq u=tw$. Since $w\neq tw$ (because $t\neq1$), we thus
obtain $w<tw$. This proves Proposition \ref{prop.bruhat.reflec} \textbf{(a)}
again. \medskip

\textbf{(b)} This can be easily derived from part \textbf{(a)} using
Proposition \ref{prop.bruhat.props1} \textbf{(d)}. Namely: Assume that
$\ell\left(  w\right)  <\ell\left(  wt\right)  $. It is well-known (and fairly
clear) that any $u\in W$ satisfies $\ell\left(  u^{-1}\right)  =\ell\left(
u\right)  $. Hence, $\ell\left(  w^{-1}\right)  =\ell\left(  w\right)  $ and
$\ell\left(  \left(  wt\right)  ^{-1}\right)  =\ell\left(  wt\right)  $.
However, $t$ is a reflection, thus an involution; hence, $t^{-1}=t$. Thus,
$\left(  wt\right)  ^{-1}=\underbrace{t^{-1}}_{=t}w^{-1}=tw^{-1}$. Thus, we
can rewrite $\ell\left(  \left(  wt\right)  ^{-1}\right)  =\ell\left(
wt\right)  $ as $\ell\left(  tw^{-1}\right)  =\ell\left(  wt\right)  $. Now,
we have%
\[
\ell\left(  w^{-1}\right)  =\ell\left(  w\right)  <\ell\left(  wt\right)
=\ell\left(  tw^{-1}\right)  .
\]
Thus, Proposition \ref{prop.bruhat.reflec} \textbf{(a)} (applied to $w^{-1}$
instead of $w$) yields $w^{-1}<tw^{-1}=\left(  wt\right)  ^{-1}$. Therefore,
Proposition \ref{prop.bruhat.props1} \textbf{(d)} (applied to $u=w$ and
$v=wt$) yields $w<wt$. This proves Proposition \ref{prop.bruhat.reflec}
\textbf{(b)}.
\end{proof}

The last property of the Bruhat order that we will need is a known lemma
\cite[Lemma 2.2.10]{BjoBre}:

\begin{lemma}
\label{lem.bjobre22}Let $x,y\in W$ be arbitrary, and let $t\in W$ be any
reflection. Assume that $x<xt$ and $y<ty$. Then, $xy<xty$.
\end{lemma}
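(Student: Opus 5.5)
The plan is to reduce the Bruhat comparison to a length comparison, and to decide that length comparison with the geometric (root system) representation of $W$. Set $t' := y^{-1} t y$. This is conjugate to the reflection $t$, so it is itself a reflection, and $xy\, t' = x t y$. By Proposition \ref{prop.bruhat.reflec} \textbf{(b)} (applied to $w = xy$ and the reflection $t'$), it suffices to show $\ell\left(xy\right) < \ell\left(xyt'\right)$; the strict Bruhat inequality $xy < xty$ then follows.

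For the length comparison I will use the standard root-theoretic criterion from the cited texts (e.g.\ \cite[Proposition 4.4.6]{BjoBre} or \cite[\S 5.7]{Humphreys}). Let $V$ be the geometric representation of $W$, with root system $\Phi = \Phi^+ \sqcup \Phi^-$. Each reflection $r$ corresponds to a unique positive root $\alpha_r \in \Phi^+$, and this correspondence is compatible with conjugation: $w r w^{-1}$ is the reflection attached to $\pm w(\alpha_r)$. The criterion states that for $w \in W$ and a reflection $r$,
\[
\ell\left(wr\right) > \ell\left(w\right) \iff w\left(\alpha_r\right) \in \Phi^+ .
\]
Since $\ell(ry) = \ell(y^{-1} r)$, the left-handed version is $\ell\left(ry\right) > \ell\left(y\right) \iff y^{-1}\left(\alpha_r\right) \in \Phi^+$.

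The key steps, in order, are as follows.
\begin{enumerate}
\item[(1)] From $x < xt$ we get $\ell(x) < \ell(xt)$ by Proposition \ref{prop.bruhat.props1} \textbf{(b)}, so $x(\alpha_t) \in \Phi^+$.
\item[(2)] From $y < ty$ we get $\ell(y) < \ell(ty)$, so $\beta := y^{-1}(\alpha_t) \in \Phi^+$.
\item[(3)] The reflection attached to $y^{-1}(\alpha_t)$ is $y^{-1} t y = t'$. Since $\beta$ is positive, $\alpha_{t'} = \beta$ exactly, with no sign ambiguity.
\item[(4)] Now $xy\left(\alpha_{t'}\right) = xy\, y^{-1}\left(\alpha_t\right) = x\left(\alpha_t\right) \in \Phi^+$, so the criterion gives $\ell\left(xy\,t'\right) > \ell\left(xy\right)$.
\end{enumerate}
Combined with the first paragraph, this completes the argument.

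The main obstacle is that the paper has so far avoided the geometric representation, so this approach imports the root criterion and the reflection--root correspondence as black boxes from \cite{BjoBre} or \cite{Humphreys}. The delicate point is step (3): the correspondence $r \mapsto \alpha_r$ is only defined up to sign under conjugation, and it is exactly the hypothesis $y < ty$ that pins the sign down to positive.

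If a purely combinatorial argument is preferred, I would instead induct on $\ell(y)$, peeling off a left descent $s$ of $y$ and replacing $(x, t, y)$ by $(xs, sts, sy)$, with the lifting property handling the case analysis. However, the hypothesis $x < xt$ does not transfer cleanly to $xs < xts$ in every case, so I expect that route to be messier. I would try the root argument first.
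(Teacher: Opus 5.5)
Your proof is correct. It shares the paper's skeleton: turn both hypotheses into length inequalities, write $xty$ as $xy$ times a conjugate reflection, and reduce to a length comparison via Proposition \ref{prop.bruhat.reflec}. It differs in two respects.

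First, the paper conjugates by $x$. It sets $t' = xtx^{-1}$, which puts the reflection on the left, and uses part \textbf{(a)}. You conjugate by $y$, putting the reflection on the right, and use part \textbf{(b)}. Both work, since $xty = (xtx^{-1})\,xy = xy\,(y^{-1}ty)$.

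Second, and more substantively, the paper does not prove the key inequality $\ell(xty) > \ell(xy)$. It cites it from \cite[errata, Lemma 4.2.4]{Davis}. You actually derive it, from the root criterion $\ell(wr) > \ell(w) \iff w(\alpha_r) \in \Phi^+$ together with $w s_\alpha w^{-1} = s_{w(\alpha)}$. Your sign bookkeeping in step (3) is right: positivity of $y^{-1}(\alpha_t)$ is exactly what identifies it as $\alpha_{t'}$.

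What your route buys is transparency and a standard textbook citation in place of an errata lemma. The two hypotheses become the positivity statements $x(\alpha_t) \in \Phi^+$ and $y^{-1}(\alpha_t) \in \Phi^+$, and the conclusion is literally their composition. What it costs is importing the geometric representation, which the paper otherwise never uses. The paper keeps its toolkit to reduced words and length, at the price of taking this one length fact as a black box.
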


\begin{proof}
We have $x<xt$, thus $\ell\left(  x\right)  < \ell\left(  xt\right)  $ by
Proposition \ref{prop.bruhat.props1} \textbf{(b)}. That is, $\ell\left(
xt\right)  > \ell\left(  x\right)  $. Likewise, from $y<ty$, we obtain
$\ell\left(  ty\right)  > \ell\left(  y\right)  $.

Set $t^{\prime}:=xtx^{-1}$. Then, $xt=t^{\prime}x$, so that $xty=t^{\prime}%
xy$. Hence, we must show that $xy<t^{\prime}xy$. Since $t^{\prime}$ is a
reflection (because it is conjugate to the reflection $t$), this will follow
from Proposition \ref{prop.bruhat.reflec} \textbf{(a)} (applied to $xy$ and
$t^{\prime}$ instead of $w$ and $t$) once we can show that $\ell\left(
xy\right)  <\ell\left(  t^{\prime}xy\right)  $. But \cite[errata, Lemma
4.2.4]{Davis} shows that $\ell\left(  xty\right)  >\ell\left(  xy\right)  $.
Hence, $\ell\left(  xy\right)  <\ell\left(  xty\right)  =\ell\left(
t^{\prime}xy\right)  $ (since $xt=t^{\prime}x$), just as we desired to prove.
\end{proof}

For any $u,v\in W$ satisfying $u\leq v$, we define $\left[  u,v\right]  $ to
be the set%
\[
\left\{  x\in W\ \mid\ u\leq x\leq v\right\}  .
\]
This set is called a \emph{Bruhat interval}.

If $X$ is a subset of $W$, then an element $m\in X$ will be called a
\emph{Bruhat minimum} of $X$ if all $x\in X$ satisfy $m\leq x$. Not every
subset $X$ of $W$ has a Bruhat minimum. However, when the Bruhat minimum of a
subset $X$ does exist, then it is unique (since two Bruhat minima $m_{1}$ and
$m_{2}$ of $X$ would have to satisfy $m_{1}\leq m_{2}$ and $m_{2}\leq m_{1}$
and therefore $m_{1}=m_{2}$) and is the unique element of $X$ having minimum
length (because if $m$ is a Bruhat minimum of $X$, then every $x\neq m$ in $X$
satisfies $m<x$ and therefore $\ell\left(  m\right)  <\ell\left(  x\right)  $
by Proposition \ref{prop.bruhat.props1} \textbf{(b)}).

All the same applies to Bruhat maxima: If $X$ is a subset of $W$, then an
element $M\in X$ will be called a \emph{Bruhat maximum} of $X$ if all $x\in X$
satisfy $M\geq x$. Again, if such an $M$ exists, then it is unique and is the
unique element of $X$ having maximum length.

Any Bruhat interval $\left[  u,v\right]  $ has the Bruhat minimum $u$ and the
Bruhat maximum $v$. However, not every subset of $W$ that has a Bruhat minimum
and a Bruhat maximum is a Bruhat interval.

\section{The theorems}

If $I$ is any subset of $S$, then we let $W_{I}$ denote the subgroup of $W$
generated by $I$. It is well-known (see \cite[Proposition 2.4.1 (i) and
(ii)]{BjoBre} or \cite[\S 9.4 and \S 9.6]{Lusztig}) that $W_{I}$ is itself a
Coxeter group (with simple reflections $I$) whose length function is a
restriction of that of $W$. It is called a \emph{parabolic subgroup} of $W$
(although some authors, such as Humphreys in \cite{Humphreys}, use this word
more widely for conjugates of such subgroups).

Given two subsets $I$ and $J$ of $S$, the double cosets $W_{I}wW_{J}$ in $W$
are called \emph{parabolic double cosets}; they have been studied by various
authors from the viewpoints of descent algebras (\cite{Solomo76}), Hecke
algebras (\cite{Curtis}), row-semistandard tableaux (\cite{Wildon14}) and
contingency tables (\cite{HohSka05}). Recently, the total number of distinct
parabolic double cosets in a finite Coxeter group was computed (\cite{BKPST18}).

A classical result (essentially a consequence of \cite[Exercise IV.\S 1.3]%
{Bourbaki-Lie4-6}) says that each parabolic double coset $W_{I}wW_{J}$, where
$I$ and $J$ are two subsets of $S$ (and where $W_{I}$ and $W_{J}$ are the
corresponding parabolic subgroups of $W$), has a Bruhat minimum, called the
\textquotedblleft distinguished representative\textquotedblright\ of this
double coset. If $W_{I}$ and $W_{J}$ are finite, then each parabolic double
coset $W_{I}wW_{J}$ also has a Bruhat maximum, and is an interval in the
Bruhat order. This appears to be a folklore result (see, e.g.,
\cite[Proposition 2.7 (c)]{BKPST18} and \cite{Kobaya11}), but proofs are not
easy to find in the literature, so we will include a proof for the sake of
completeness:

\begin{theorem}
\label{thm.double-interval} Let $I$ and $J$ be two subsets of $S$ such that
the parabolic subgroups $W_{I}$ and $W_{J}$ are finite. Let $w\in W$. Then,
the double coset $W_{I}wW_{J}$ is a Bruhat interval.
\end{theorem}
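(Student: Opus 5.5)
We must show $W_IwW_J = [m, M]$ for a Bruhat minimum $m$ and a Bruhat maximum $M$ of the coset, and that every $x$ with $m\le x\le M$ lies in the coset. The plan is to build both extremal elements by repeatedly adjusting with simple reflections, and then to prove that the coset is closed under the ``downward'' and ``upward'' moves given by the subword property.

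\textbf{Step 1 (minimum and maximum).} Let $m$ be an element of minimal length in $C:=W_IwW_J$. Then $\ell(sm)>\ell(m)$ for all $s\in I$ and $\ell(ms)>\ell(m)$ for all $s\in J$.

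We show $m\le x$ for all $x\in C$ by induction on $\ell(x)$. If $x\ne m$, some $s\in I$ has $\ell(sx)<\ell(x)$, or some $s\in J$ has $\ell(xs)<\ell(x)$. Otherwise $x$ would be a second minimal-type element, and the standard argument (the double-coset version of the Deodhar/Bourbaki uniqueness, \cite[Exercise IV.\S 1.3]{Bourbaki-Lie4-6}) rules this out. Say $\ell(sx)<\ell(x)$. Then $sx\in C$, and by induction $m\le sx$. By Proposition \ref{prop.bruhat.reflec}, $sx<x$, so $m\le x$.

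Symmetrically, since $W_I$ and $W_J$ are finite, $C$ is finite. Let $M$ be an element of maximal length in $C$. Then $\ell(sM)<\ell(M)$ for all $s\in I$ and $\ell(Ms)<\ell(M)$ for all $s\in J$. The same induction, running downward from $M$, gives $x\le M$ for all $x\in C$, because any $x\ne M$ admits an upward simple move inside $C$.

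\textbf{Step 2 (closure).} Let $x\in W$ with $m\le x\le M$; we must show $x\in C$. We write $M=uMv$-style factorizations: $M=a\,m'\,b$ reduced, where $a\in W_I$, $b\in W_J$ and $m'$ is chosen so that reduced expressions concatenate. Concretely, I would show $M=a\,m\,b$ with $\ell(M)=\ell(a)+\ell(m)+\ell(b)$, using the maximal-length structure (for instance $a=w_0(I)$ and $b$ the longest element of $W_J$ modulo the appropriate stabilizer $W_{J\cap m^{-1}Im}$).

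Take a reduced word for $M$ obtained by concatenating reduced words of $a$, $m$ and $b$. By Proposition \ref{prop.bruhat-redex2}, $x=a'\,y\,b'$ with $a'\in W_I$, $b'\in W_J$, and $y$ a subword of $m$. Since $x\ge m$, we need to conclude $y$ lies in $W_I m W_J$.

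This is the main obstacle: an arbitrary subword $y\le m$ need not be in the coset. My plan is to avoid it by an inductive \emph{lifting property} argument instead. We prove by induction on $\ell(M)-\ell(x)$ that $x\in C$. If $x<M$, pick $s\in I$ with $sM<M$. There are two cases:
\begin{itemize}
\item If $sx>x$, the lifting property gives $x\le sM$, and $sM\in C$. We then need an interval-type induction with a varying top element inside $C$. So the induction should instead be: for all $y\in C$, the set $[m,y]$ lies in $C$, by induction on $\ell(y)$.
\item If $sx<x$, then $sx\le sM$ (lifting) and $sx\ge m$? The latter needs $sx\ge m$. This holds since $x\ge m$ and $sm>m$: the lifting property gives $sx\ge m$. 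By induction $sx\in C$, hence $x\in C$.
\end{itemize}

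For the base of the $[m,y]$ induction, take $y\in C$ with $y\ne m$, so that $y$ has a descent $s$ on the left in $I$ or on the right in $J$ with $sy\in C$. We then apply the two cases above with $y$ in place of $M$. In the first case $x\le sy$, which is covered by the induction hypothesis. The right side is handled by Proposition \ref{prop.bruhat.props1}(c).

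I expect verifying the lifting property from Lemma \ref{lem.bjobre22} and Proposition \ref{prop.bruhat.reflec} to be the technical heart of the proof. The finiteness of $W_I$ and $W_J$ is used only to guarantee that $M$ exists.
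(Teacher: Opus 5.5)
There is one genuine gap, in the maximum half of Step 1. The induction ``running downward from $M$'' rests on the claim that every $x\in C$ with $x\neq M$ has a left ascent in $I$ or a right ascent in $J$. Equivalently, $M$ would be the only element of $C$ having all of $I$ as left descents and all of $J$ as right descents. This is true, but you give no argument for it. It is not the Bourbaki characterization you cite, which concerns the minimum, and it does not follow from that by an obvious symmetry. For instance, $x\mapsto w_Ixw_J$ does not exchange $m$ and $M$: in $S_3$ with $I=J=\{s_1\}$ and $w=1$ one has $C=\{1,s_1\}$, yet $w_IMw_J=s_1\neq m$. So, as written, the existence of the Bruhat maximum is unproved.

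The repair uses the tool you already need in Step 2. Since $sM<M$ for every $s\in I$, the lifting property gives $v\leq M\Rightarrow sv\leq M$. Induction on $\ell(a)$ then yields $aM\leq M$ for all $a\in W_I$. The mirror statement, using $Mt<M$ for $t\in J$, gives $aMb\leq M$ for all $a\in W_I$, $b\in W_J$. This shows directly that $M$ is the Bruhat maximum, with no need for the ``upward move'' claim; it is exactly the paper's route through Lemmas~\ref{max.reflection}, \ref{inequality.thickening} and \ref{double.thickening}. Also, the lifting property is not the ``technical heart'', and you do not need Lemma~\ref{lem.bjobre22} for it. All three instances you use ($x\le sy$; $sx\le sy$; $m\le sx$) follow immediately from Proposition~\ref{prop.bruhat-redex2}. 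Take a reduced word for $y$ (resp.\ $x$) beginning with $s$, and ask whether the subword uses that first letter.

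With that repair, your Step 2 is correct and differs from the paper's argument. In its final form it shows $[m,y]\subseteq C$ for all $y\in C$ by induction on $\ell(y)$, using a descent of $y\neq m$ and lifting in the two cases $sx>x$ and $sx<x$. The paper avoids lifting altogether for the inclusion $[m,M]\subseteq C$:
\begin{itemize}
\item It writes $M=pmq$ with $p\in W_I$, $q\in W_J$, with no length-additivity needed.
\item It disassembles $u\leq pmq$ as $u=p'm'q'$ with $p'\leq p$, $m'\leq m$, $q'\leq q$ (Lemma~\ref{lem.leqexist}), so $p'\in W_I$ and $q'\in W_J$.
\item It disassembles $m\leq p'm'q'$ once more as $m=p''m''q''$, so $m''\in C$, and $m\leq m''\leq m'\leq m$ forces $m'=m$.
\end{itemize}
This is precisely the resolution of the ``main obstacle'' you abandoned. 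The middle subword is forced to equal $m$ by a second application of the subword property, and no special reduced factorization $M=amb$ with $a=w_I$ is required.

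Your route costs the descent characterization of $m$ plus lifting at every step. The paper's costs only the subword property and the Bruhat-minimality of $m$. Both arguments in fact prove order-convexity of $C$ ($m\leq x\leq y\in C\Rightarrow x\in C$) without any finiteness hypothesis. Finiteness enters only through the existence of $M$.
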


In 2024, the MathOverflow user \textquotedblleft fool rabbit\textquotedblright%
\ conjectured\footnote{in \url{https://mathoverflow.net/questions/472072/}}
that \textquotedblleft parabolic triple cosets\textquotedblright\ -- that is,
subsets of the form $W_{I}uW_{J}vW_{K}$ for three subsets $I,J,K$ of $S$ --
also have Bruhat minima and maxima when $W$ is finite. Our main goal in this
note is to prove this conjecture, and even dispense with the finiteness
condition to the extent possible\footnote{We cannot hope to avoid requiring
finiteness of $W_{I},W_{J},W_{K}$ in Theorem \ref{thm.triplemax}, since an
infinite subset of $W$ cannot have a Bruhat maximum (after all, for any $w\in
W$, there are only finitely many elements $u\in W$ satisfying $u\leq w$).}:

\begin{theorem}
\label{thm.triple} Let $I,J,K$ be three subsets of $S$. Consider the parabolic
subgroups $W_{I},W_{J},W_{K}$. Let $u,v\in W$ be arbitrary. Then, the subset
$W_{I}uW_{J}vW_{K}$ of $W$ has a Bruhat minimum.
\end{theorem}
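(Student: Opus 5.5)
The plan is to split the problem into a "middle" part and an "outer" part. First, I replace $u$ and $v$ by suitable representatives. This does not change the set $X:=W_{I}uW_{J}vW_{K}$. I take $u$ to be the minimal-length element of its right coset $uW_{J}$, and $v$ to be the minimal-length element of its left coset $W_{J}v$. Such representatives exist and satisfy $\ell\left(uj\right)=\ell\left(u\right)+\ell\left(j\right)$ and $\ell\left(jv\right)=\ell\left(j\right)+\ell\left(v\right)$ for all $j\in W_{J}$. Both facts are standard, e.g. \cite[Proposition 2.4.4]{BjoBre}.

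\textbf{Step 1 (the middle factor).} I will show that $uv\leq ujv$ for every $j\in W_{J}$, by induction on $\ell\left(j\right)$. If $j\neq1$, write $j=j^{\prime}s$ with $s\in J$ and $\ell\left(j^{\prime}\right)<\ell\left(j\right)$. Set $p:=uj^{\prime}$ and $q:=v$.
\begin{itemize}
\item By the length additivity, $\ell\left(ps\right)=\ell\left(u\right)+\ell\left(j\right)>\ell\left(p\right)$, so Proposition \ref{prop.bruhat.reflec} \textbf{(b)} gives $p<ps$.
\item Similarly $\ell\left(sv\right)=1+\ell\left(v\right)>\ell\left(v\right)$, so Proposition \ref{prop.bruhat.reflec} \textbf{(a)} gives $q<sq$.
\item Lemma \ref{lem.bjobre22} then yields $uj^{\prime}v=pq<psq=ujv$.
\end{itemize}
Combining this with the induction hypothesis $uv\leq uj^{\prime}v$ gives $uv\leq ujv$. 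So $uv$ is a Bruhat minimum of $uW_{J}v$.

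\textbf{Step 2 (the outer factors).} For $y\in W$, let $\pi\left(y\right)$ denote the Bruhat minimum of the double coset $W_{I}yW_{K}$; it exists by the classical result cited before Theorem \ref{thm.double-interval}. My candidate for the minimum of $X$ is $m:=\pi\left(uv\right)$, which lies in $X$. Every $x\in X$ has the form $x=azc$ with $a\in W_{I}$, $c\in W_{K}$ and $z\in uW_{J}v$. Then
\[
m=\pi\left(uv\right)\leq\pi\left(z\right)=\pi\left(x\right)\leq x,
\]
provided that $\pi$ is order-preserving. The first inequality uses $uv\leq z$ from Step 1, and the last holds because $\pi\left(x\right)$ is the minimum of $W_{I}xW_{K}\ni x$.

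\textbf{Main obstacle.} The key remaining claim is that $y\leq y^{\prime}$ implies $\pi\left(y\right)\leq\pi\left(y^{\prime}\right)$. I expect this to be the hardest step. I would prove it by induction on $\ell\left(y\right)+\ell\left(y^{\prime}\right)$.

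If $y^{\prime}$ has a left descent $s\in I$, i.e. $sy^{\prime}<y^{\prime}$, the lifting property of the Bruhat order \cite[Proposition 2.2.7]{BjoBre} gives $\min\left(y,sy\right)\leq sy^{\prime}$. Since $\pi$ is constant on $y$, $sy$ and on $y^{\prime}$, $sy^{\prime}$, the induction hypothesis then finishes this case. A right descent in $K$ is handled symmetrically, using Proposition \ref{prop.bruhat.props1} \textbf{(c)}.

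Otherwise $y^{\prime}$ has no left descents in $I$ and no right descents in $K$. It is then the minimal element of its double coset, i.e. $y^{\prime}=\pi\left(y^{\prime}\right)$. Hence $\pi\left(y\right)\leq y\leq y^{\prime}=\pi\left(y^{\prime}\right)$.

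For the phrase "no descents implies minimal", I would cite the standard characterization of distinguished double coset representatives \cite[Exercise IV.\S 1.3]{Bourbaki-Lie4-6}. This description holds without any finiteness assumption on $W_{I}$ or $W_{K}$.
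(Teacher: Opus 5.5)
Your proof is correct, but it follows a genuinely different route from the paper's.

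\textbf{The paper's route.} The paper normalizes $u,v$ to the minima $u',v'$ of the double cosets $W_IuW_J$ and $W_JvW_K$. It takes $m$ to be the minimum of $W_Iu'v'W_K$ and proves $m\leq xu'yv'z$ by one strong induction on $\ell(y)$. Writing $y=\overline{y}j$, there are two situations:
\begin{itemize}
\item if $xu'y$ or $jv'z$ is not length-additive, Proposition \ref{prop.double-coset} \textbf{(d)} shortens $y$ or absorbs $j$ into $z$;
\item otherwise Lemma \ref{lem.bjobre22} gives $xu'\overline{y}v'z<xu'yv'z$.
\end{itemize}
Thus the ``middle'' and ``outer'' reductions are interleaved in a single induction. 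Nothing beyond the subword and deletion properties and Lemma \ref{lem.bjobre22} is needed.

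\textbf{Your route.} You separate two facts. First, $uv$ is the Bruhat minimum of $uW_Jv$ for one-sided minimal representatives; this is the paper's Case 3 argument, i.e.\ the minimum analogue of Lemma \ref{lem-sandwich}. Second, the projection $\pi$ onto distinguished double coset representatives is order-preserving. Your monotonicity argument is sound. One small precision: in the subcase $sy<y$, the inequality $sy\leq sy'$ comes from applying the lifting property to the pair $sy<y'$, not to $y<y'$. The subcase $y=y'$ is trivial.

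\textbf{Comparison.} Your route buys modularity. It shows that $W_IYW_K$ has Bruhat minimum $\pi(y_0)$ whenever $Y\subseteq W$ has Bruhat minimum $y_0$. It is also the exact mirror of the paper's proof of Theorem \ref{thm.triplemax}, where Lemma \ref{double.thickening} plays the role of a monotonicity statement for maxima. The cost is that you import the lifting property and the descent characterization of distinguished representatives. The latter does follow quickly from the factorization argument in the proof of Proposition \ref{prop.double-coset} \textbf{(c)}. The paper's proof of Theorem \ref{thm.triple}, by contrast, stays within its self-contained toolkit.
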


\begin{theorem}
\label{thm.triplemax} Let $I,J,K$ be three subsets of $S$. Assume that the
parabolic subgroups $W_{I},W_{J},W_{K}$ are finite. Let $u,v\in W$ be
arbitrary. Then, the subset $W_{I}uW_{J}vW_{K}$ of $W$ has a Bruhat maximum.
\end{theorem}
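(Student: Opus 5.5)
The plan is to use the Demazure (``$0$-Hecke'') product $\ast$ on $W$ and to show that the Bruhat maximum of $X:=W_{I}uW_{J}vW_{K}$ is
\[
M:=w_{0}\left(  I\right)  \ast u\ast w_{0}\left(  J\right)  \ast v\ast w_{0}\left(  K\right)  ,
\]
where $w_{0}\left(  L\right)  $ denotes the longest element of the finite parabolic subgroup $W_{L}$. Recall that $x\ast s=xs$ if $\ell\left(  xs\right)  >\ell\left(  x\right)  $ and $x\ast s=x$ otherwise; this is extended to all of $W$ along reduced words, and the result is independent of the chosen reduced word. I would first record the standard facts about $\ast$. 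It is associative. It is monotone, meaning that $x\leq x^{\prime}$ and $y\leq y^{\prime}$ imply $x\ast y\leq x^{\prime}\ast y^{\prime}$. We have $xy\leq x\ast y$. Finally, $x\ast y=xy^{\prime}$ for some $y^{\prime}\leq y$, and likewise $x\ast y=x^{\prime}y$ for some $x^{\prime}\leq x$. All of these follow from Proposition \ref{prop.bruhat-redex2} together with the lifting property, which in turn follows from Proposition \ref{prop.bruhat.reflec} and Lemma \ref{lem.bjobre22}. These facts are not in the paper, so they have to be proved or quoted.

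\textbf{Upper bound.} This half is easy. Any element of $X$ has the form $puqvr$ with $p\in W_{I}$, $q\in W_{J}$, $r\in W_{K}$, and hence $p\leq w_{0}\left(  I\right)  $, $q\leq w_{0}\left(  J\right)  $, $r\leq w_{0}\left(  K\right)  $ by Proposition \ref{prop.bruhat-redex2}. Applying $xy\leq x\ast y$ repeatedly, and then monotonicity, gives
\[
puqvr\leq p\ast u\ast q\ast v\ast r\leq M.
\]

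\textbf{Membership $M\in X$.} This is the main obstacle. The outer factors are harmless. From $x\ast w_{0}\left(  K\right)  =xy^{\prime}$ with $y^{\prime}\leq w_{0}\left(  K\right)  $, we get $y^{\prime}\in W_{K}$ by Proposition \ref{prop.bruhat-redex2}, so right-multiplying by $\ast\,w_{0}\left(  K\right)  $ stays inside $xW_{K}$. Symmetrically, left-multiplying by $w_{0}\left(  I\right)  \ast$ stays inside $W_{I}x$. It therefore suffices to prove the following key lemma: for all $x,y\in W$, we have $x\ast w_{0}\left(  J\right)  \ast y\in xW_{J}y$.

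To prove the key lemma, I would set $z:=x\ast w_{0}\left(  J\right)  $. This $z$ is the longest element of $xW_{J}$, so every $s\in J$ is a right descent of $z$. I would then induct on $\ell\left(  y\right)  $, writing $y=y^{\prime}s$ with $\ell\left(  y\right)  =\ell\left(  y^{\prime}\right)  +1$, and strengthen the claim to: $z\ast y$ is the longest element (equivalently, by the upper-bound argument, the Bruhat maximum) of the finite set $xW_{J}y$. In the inductive step, let $m^{\prime}:=z\ast y^{\prime}$ be the maximum of $xW_{J}y^{\prime}$.
\begin{itemize}
\item If $m^{\prime}s>m^{\prime}$, then $z\ast y=m^{\prime}s\in xW_{J}y$, and we are done.
\item If $m^{\prime}s<m^{\prime}$, then $z\ast y=m^{\prime}$, and we must show $m^{\prime}\in xW_{J}y^{\prime}s$, that is, $m^{\prime}s\in xW_{J}y^{\prime}$.
\end{itemize}
For the second case I would first try to exploit the fact that $z$ has all of $J$ as right descents. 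Write $y=y_{J}\,{}^{J}y$ with $y_{J}\in W_{J}$ and ${}^{J}y$ minimal in $W_{J}y$. Then $z\ast y=z\ast{}^{J}y$ and $xW_{J}y=xW_{J}\,{}^{J}y$, which reduces the lemma to the case where $y$ is a minimal left $W_{J}$-coset representative. In that case I would try to show that any descent $m^{\prime}s<m^{\prime}$ either comes from a letter of a reduced word of ${}^{J}y$ being cancelled inside the product (giving $m^{\prime}s=zy^{\prime\prime}$ with $y^{\prime\prime}$ obtained by deleting that letter), or from the $W_{J}$-part. In the latter situation I would use Lemma \ref{lem.bjobre22} with a reflection in $W_{J}$ to move the deletion into the $W_{J}$ factor, which keeps us inside $xW_{J}y^{\prime}$. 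If this case analysis gets stuck, a fallback is to take $m$ of maximal length in $X$ directly and prove $x\leq m$ for all $x\in X$ by descending induction on $\ell\left(  x\right)  $, using Proposition \ref{prop.bruhat.reflec} to climb from $x$ toward $m$ within $X$.
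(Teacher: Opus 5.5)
There is a genuine gap: your candidate $M$ is in general not an element of $X$, because the key lemma $x\ast w_{0}(J)\ast y\in xW_{J}y$ is false. For $J=\varnothing$ it says $x\ast y=xy$, which fails for $x=y=s\in S$: here $s\ast s=s\neq1=ss$. So for $I=J=K=\varnothing$ and $u=v=s$ you get $X=\{1\}$ but $M=s$. This is not an artifact of trivial subgroups. If $s,t\in S$ commute, $J=\{t\}$, $I=K=\varnothing$ and $u=v=s$, then $X=s\{1,t\}s=\{1,t\}$, whereas $s\ast t\ast s=st\notin X$.

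The underlying reason is that $a\ast b$ is the Bruhat maximum of $\{a'b'\mid a'\leq a,\ b'\leq b\}$. For the outer factors $w_{0}(I)$ and $w_{0}(K)$, passing to smaller elements keeps you inside $W_{I}$ and $W_{K}$, so that part of your membership argument is fine. Inside $u\ast w_{0}(J)\ast v$, however, it effectively replaces $u$ and $v$ by smaller elements, which leaves the triple coset. This is exactly where your induction breaks. In the case $m's<m'$, the required statement $m's\in xW_{J}y'$ is simply false: with $J=\varnothing$, $x=y=s$ and $y'=1$, you have $m'=s$ and $m's=1\notin\{s\}$. The reduction to $y={}^{J}y$ does not rescue it, since $z\cdot{}^{J}y$ need not be length-additive. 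Your fallback is not an argument as stated either: climbing within $X$ by length-increasing steps gives no reason why every climb ends at the same element, and that is precisely the content of the theorem.

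The missing idea is to handle the middle factor with an ordinary product rather than a Demazure product. Let $u_{0}$ and $v_{0}$ be the minimal-length elements of $uW_{J}$ and $W_{J}v$, and set $n:=u_{0}w_{0}(J)v_{0}\in uW_{J}v=u_{0}W_{J}v_{0}$. Then $n$ is the Bruhat maximum of $uW_{J}v$. To see this, raise any $q\in W_{J}$ to $w_{0}(J)$ by length-increasing right multiplications $q\mapsto qs$ with $s\in J$ (Lemma \ref{max-ext}). At each step apply Lemma \ref{lem.bjobre22} to $u_{0}q<u_{0}qs$ and $v_{0}<sv_{0}$; both follow from the minimality of $u_{0}$ and $v_{0}$, the first via Lemma \ref{len-add}. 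This is Lemma \ref{lem-sandwich}.

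With $M:=w_{0}(I)\ast n\ast w_{0}(K)$, your remaining arguments go through. Membership follows from your outer-factor argument. The upper bound follows from $p(uqv)r\leq p\ast(uqv)\ast r\leq w_{0}(I)\ast n\ast w_{0}(K)$, using $uqv\leq n$ and monotonicity. The result is essentially the paper's proof: Lemma \ref{lem-sandwich} for $uW_{J}v$, then Lemma \ref{double.thickening} for the thickening by $W_{I}$ and $W_{K}$. In your version the Demazure product replaces the paper's Lemmas \ref{max.reflection} and \ref{inequality.thickening}, but the Demazure facts you invoke would still have to be proved or quoted.
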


However, triple cosets are not as well-behaved as double cosets. In
particular, even if $W$ is finite, they are not always Bruhat intervals
(unlike double cosets, for which we know this from Theorem
\ref{thm.double-interval}). For example, in the symmetric group $S_{3}$
(regarded as a Coxeter group of type $A_{2}$), the triple coset
$W_{\varnothing}s_{2}W_{\left\{  s_{1}\right\}  }s_{2}W_{\varnothing}%
=s_{2}W_{\left\{  s_{1}\right\}  }s_{2}=\left\{  1,w_{0}\right\}  $ is not a
Bruhat interval.

One could also define \textquotedblleft parabolic quadruple
cosets\textquotedblright\ $W_{I}uW_{J}vW_{K}wW_{L}$. However, these do not
satisfy an analogue of Theorem \ref{thm.triple}. For instance, if $W$ is the
symmetric group $S_{3}$, then the \textquotedblleft quadruple
coset\textquotedblright\ $W_{\varnothing}s_{1}W_{\left\{  s_{2}\right\}
}1W_{\left\{  s_{1}\right\}  }s_{2}W_{\varnothing}$ has two distinct elements
of minimum length ($s_{1}$ and $s_{2}$), and so has no Bruhat minimum.

\section{Double cosets revisited}

We will use some classical properties of double cosets in Coxeter groups:

\begin{proposition}
\label{prop.double-coset}Let $P$ and $Q$ be two subsets of $S$. Let $w\in W$. Then:

\begin{enumerate}
\item[\textbf{(a)}] The double coset $W_{P}wW_{Q}$ has a unique element of
minimum length.
\end{enumerate}

Let us denote this element by $m$. Then:

\begin{enumerate}
\item[\textbf{(b)}] Each $u\in W_{P}wW_{Q}$ can be written as $u=xmy$ with
$x\in W_{P}$ and $y\in W_{Q}$.

\item[\textbf{(c)}] The element $m$ is the Bruhat minimum of $W_{P}wW_{Q}$.

\item[\textbf{(d)}] Let $x\in W_{P}$ and $y\in W_{Q}$ be such that
$\ell\left(  xmy\right)  \neq\ell\left(  x\right)  +\ell\left(  m\right)
+\ell\left(  y\right)  $. Then, there exist $x^{\prime}\in W_{P}$ and
$y^{\prime}\in W_{Q}$ such that $x^{\prime}<x$ and $y^{\prime}<y$ and
$xmy=x^{\prime}my^{\prime}$.
\end{enumerate}
\end{proposition}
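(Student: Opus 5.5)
The plan is to use the standard theory of minimal coset representatives and to derive (a) through (d) in that order. Write $W^{Q}=\{w\in W:\ell(ws)>\ell(w)\text{ for all }s\in Q\}$ and ${}^{P}W$ analogously on the left. Two classical facts will be assumed. The first is the parabolic factorization: every $w\in W$ factors uniquely as $w=w^{Q}w_{Q}$ with $w^{Q}\in W^{Q}$, $w_{Q}\in W_{Q}$, and then $\ell(w)=\ell(w^{Q})+\ell(w_{Q})$; the mirror statement holds on the left. The second is the exchange/deletion property.

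\textbf{(a), (b).} Let $m$ be an element of minimal length in $W_{P}wW_{Q}$. Then $m$ lies in ${}^{P}W\cap W^{Q}$, since otherwise multiplying by a descent in $P$ or $Q$ would shorten it. I would prove the following key claim. If $m\in{}^{P}W\cap W^{Q}$, then every element of $W_{P}mW_{Q}$ can be written as $xmy$ with $x\in W_{P}$, $y\in W_{Q}$ and $\ell(xmy)=\ell(x)+\ell(m)+\ell(y)$. To prove it, take $x_{0}my_{0}$ in the coset. Write $my_{0}=(my_{0})^{Q}$-part times its $W_{Q}$-part; because $m\in W^{Q}$, we get $\ell(my)=\ell(m)+\ell(y)$ for all $y\in W_{Q}$. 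Now take an element $x m y$ of the coset with $\ell(x)+\ell(y)$ minimal among such representations, and show that lengths add. Suppose they do not. Then by the exchange property applied to a reduced word of $x$ concatenated with reduced words of $m$ and $y$, some letter can be deleted. Since $\ell(my)=\ell(m)+\ell(y)$ and $\ell(xm)=\ell(x)+\ell(m)$ (because $m\in{}^{P}W$), the two deleted letters must be one in $x$ and one in $y$. This gives $xmy=x'my'$ with $x'<x$ and $y'<y$ as subwords, contradicting minimality.

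This "pairing" argument is exactly the content of (d), so I would prove (d) first in this form and get (b) from it. The main obstacle is the deletion step. The cleanest route is this: with $x=s_{1}\cdots s_{a}$ reduced, if $\ell(x\cdot my)<\ell(x)+\ell(my)$, pick the largest $i$ with $s_{i}\cdots s_{a}my$ non-reduced. The exchange condition then gives $s_{i}(s_{i+1}\cdots s_{a}my)=s_{i+1}\cdots s_{a}\,\widehat{\cdots}$ with one letter of $my$ deleted. That letter cannot be in $m$: otherwise $s_{i}\cdots s_{a}m=s_{i+1}\cdots s_{a}m'$ with $\ell(m')<\ell(m)$, and $m'=(s_{i+1}\cdots s_{a})^{-1}s_{i}\cdots s_{a}m\in W_{P}m$, contradicting the minimality of $m$ in $W_{P}m$. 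So it is a letter of $y$. This produces $x'=s_{1}\cdots\widehat{s_{i}}\cdots s_{a}<x$ and $y'<y$, both subwords, with $xmy=x'my'$; the strict inequalities come from the length drop together with Proposition \ref{prop.bruhat-redex2}. Uniqueness in (a) follows, because any minimal-length element equals $xmy$ with additive length, which forces $x=y=1$.

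\textbf{(c).} Given $u\in W_{P}wW_{Q}$, write $u=xmy$ with lengths adding, using (b) and the length-additive representation above. Concatenating reduced words of $x$, $m$ and $y$ gives a reduced word for $u$, and $m$ is a subword of it. Hence $m\le u$ by Proposition \ref{prop.bruhat-redex2}.
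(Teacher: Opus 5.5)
Your proof is correct and is essentially the paper's argument. For an arbitrary minimal-length $m$, you prove (d) by applying deletion/exchange to the concatenated reduced words and ruling out a deleted letter inside $m$ by minimality of $m$. You then choose a factorization $u=xmy$ minimizing $\ell(x)+\ell(y)$ (the paper minimizes $\ell(x)$) to force length-additivity. This gives (c) via Proposition~\ref{prop.bruhat-redex2}, and uniqueness in (a), which you get from $x=y=1$ and the paper gets from uniqueness of Bruhat minima.

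The one substantive difference is that you import the parabolic factorization $W=W^{Q}W_{Q}$ to know that the concatenated word for $my$ is reduced. The paper's case analysis of the two deleted letters needs no such input, and in fact it recovers this one-sided additivity afterwards as Lemma~\ref{len-add}. In your exchange version, you should also add the one-line check that the exchanged letter cannot lie in $s_{i+1}\cdots s_{a}$, which holds because $s_{i}\cdots s_{a}$ is reduced.
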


Most of this proposition is well-known (see, e.g., \cite{Curtis} and
\cite[Proposition 8.3]{GarSta}), and in particular its part \textbf{(a)}
appears in \cite[Proposition 9.15 \textbf{(a)}]{Lusztig}. However, we shall
give a self-contained proof using nothing but the above definition of the
Bruhat order and the well-known \emph{deletion property} of Coxeter groups:

\begin{theorem}
[Deletion property]\label{thm.del} Let $w \in W$ and $s_{1}, s_{2}, \ldots,
s_{k} \in S$. Assume that $w = s_{1} s_{2} \cdots s_{k}$, but this is
\textbf{not} a reduced expression for $w$ (that is, we have $\ell\left(
w\right)  < k$). Then, we can delete two factors from the product $s_{1} s_{2}
\cdots s_{k}$ in such a way that the resulting product is still $w$. (That is,
there exist two numbers $i < j$ in $\left\{  1,2,\ldots,k \right\}  $ such
that $w = s_{1} s_{2} \cdots\widehat{s_{i}} \cdots\widehat{s_{j}} \cdots
s_{k}$, where the hat over a factor means that this factor is excluded from
the product.)
\end{theorem}

\begin{proof}
See \cite[Proposition 1.4.7]{BjoBre} or \cite[Theorem 3.3.4 (iii) + Theorem
3.2.16 (E) $\Longrightarrow$ (D)]{Davis} or \cite[(a) $\Longrightarrow$
(d)]{Schremmer} or \cite[Theorem 1.2.5]{GecPfi00} or \cite[Corollary
1.23]{Sam21} or \cite[Satz 2.15]{Werner12}.
\end{proof}

\begin{proof}
[Proof of Proposition \ref{prop.double-coset}.]We shall postpone the proof of
part \textbf{(a)} for now. Of course, this complicates our life, since the
uniqueness of the element $m$ is no longer clear. Thus, instead of defining
$m$ to be the unique element of minimum length in $W_{P}wW_{Q}$, let us merely
require that $m$ be \textbf{an} element of minimum length in $W_{P}wW_{Q}$. We
shall now prove parts \textbf{(b)}, \textbf{(c)} and \textbf{(d)} of
Proposition \ref{prop.double-coset} under this (weaker-sounding) condition.
Afterwards, we will derive part \textbf{(a)} from part \textbf{(c)}. \medskip

\textbf{(b)} Let $u\in W_{P}wW_{Q}$. From $m\in W_{P}wW_{Q}$, we obtain
$W_{P}wW_{Q}=W_{P}mW_{Q}$. Hence, $u\in W_{P}wW_{Q}=W_{P}mW_{Q}$. In other
words, there exists a factorization $u=xmy$ of $u$ with $x\in W_{P}$ and $y\in
W_{Q}$. This proves part \textbf{(b)}. \medskip

\textbf{(d)} In the following, the \textquotedblleft hat\textquotedblright%
\ symbol ($\,\widehat{\ \ }\, $) over a factor in a product signifies that
this factor must be omitted. For instance, if $a_{1},a_{2},\ldots,a_{k}$ are
some elements of a group, and if $i,j\in\left\{  1,2,\ldots,k\right\}  $ are
two indices with $i<j$, then \textquotedblleft$a_{1}a_{2}\cdots\widehat{a_{i}%
}\cdots\widehat{a_{j}}\cdots a_{k}$\textquotedblright\ means the product%
\[
a_{1}a_{2}\cdots a_{i-1}a_{i+1}a_{i+2}\cdots a_{j-1}a_{j+1}a_{j+2}\cdots a_{k}%
\]
(that is, the product $a_{1}a_{2}\cdots a_{k}$ with the two factors $a_{i}$
and $a_{j}$ omitted).

Let $p_{1}p_{2}\cdots p_{i}$ and $q_{1}q_{2}\cdots q_{j}$ and $m_{1}%
m_{2}\cdots m_{\ell}$ be reduced expressions for $x$ and $y$ and $m$,
respectively, where all the $p_{k}$ belong to $P$ (since $x\in W_{P}$), and
where all the $q_{k}$ belong to $Q$ (since $y\in W_{Q}$). Thus, $\ell\left(
x\right)  =i$ and $\ell\left(  y\right)  =j$ and $\ell\left(  m\right)  =\ell$
and $x=p_{1}p_{2}\cdots p_{i}$ and $y=q_{1}q_{2}\cdots q_{j}$ and
$m=m_{1}m_{2}\cdots m_{\ell}$. The latter three equalities yield%
\begin{equation}
xmy=p_{1}p_{2}\cdots p_{i}m_{1}m_{2}\cdots m_{\ell}q_{1}q_{2}\cdots q_{j}.
\label{pf.prop.double-coset.red?}%
\end{equation}
This \textbf{cannot} be a reduced expression for $xmy$, because if it were,
then we would obtain%
\[
\ell\left(  xmy\right)  =i+\ell+j=\ell\left(  x\right)  +\ell\left(  m\right)
+\ell\left(  y\right)  ,
\]
which would contradict our condition $\ell\left(  xmy\right)  \neq\ell\left(
x\right)  +\ell\left(  m\right)  +\ell\left(  y\right)  $.

So the product $p_{1}p_{2}\cdots p_{i}m_{1}m_{2}\cdots m_{\ell}q_{1}%
q_{2}\cdots q_{j}$ is not a reduced expression. Hence, by the deletion
property (Theorem~\ref{thm.del}), we can remove two of the factors
\[
p_{1},p_{2},\ldots,p_{i},m_{1},m_{2},\ldots,m_{\ell},q_{1},q_{2},\ldots,q_{j}
\]
from this product without changing the value of this product. Let us refer to
these two factors as the two \emph{redundant factors} (since their removal
leaves the product unchanged). We are in one of the following five cases:

\textit{Case 1:} Both redundant factors have the forms $m_{a}$ and $m_{b}$ for
some $a,b\in\left\{  1,2,\ldots,\ell\right\}  $.

\textit{Case 2:} Both redundant factors have the forms $p_{a}$ and $p_{b}$ for
some $a,b\in\left\{  1,2,\ldots,i\right\}  $.

\textit{Case 3:} Both redundant factors have the forms $q_{a}$ and $q_{b}$ for
some $a,b\in\left\{  1,2,\ldots,j\right\}  $.

\textit{Case 4:} Only \textbf{one} of the redundant factors has the form
$m_{a}$ for some $a\in\left\{  1,2,\ldots,\ell\right\}  $ (while the other has
the form $p_{b}$ or $q_{b}$).

\textit{Case 5:} One of the redundant factors has the form $p_{a}$ for some
$a\in\left\{  1,2,\ldots,i\right\}  $, while the other has the form $q_{b}$
for some $b\in\left\{  1,2,\ldots,j\right\}  $.

We shall now show that Cases 1, 2, 3 and 4 are all impossible. Indeed:

\begin{itemize}
\item Assume that we are in Case 1. Thus, both redundant factors have the
forms $m_{a}$ and $m_{b}$ for some $a,b\in\left\{  1,2,\ldots,\ell\right\}  $.
Consider these $a,b$. WLOG assume that $a<b$. Then,%
\begin{align*}
&  p_{1}p_{2}\cdots p_{i}m_{1}m_{2}\cdots m_{\ell}q_{1}q_{2}\cdots q_{j}\\
&  =p_{1}p_{2}\cdots p_{i}m_{1}m_{2}\cdots\widehat{m_{a}}\cdots\widehat{m_{b}%
}\cdots m_{\ell}q_{1}q_{2}\cdots q_{j}%
\end{align*}
(since the removal of the redundant factors leaves the product \newline%
$p_{1}p_{2}\cdots p_{i}m_{1}m_{2}\cdots m_{\ell}q_{1}q_{2}\cdots q_{j}$
unchanged). Hence,
\[
m_{1}m_{2}\cdots m_{\ell}=m_{1}m_{2}\cdots\widehat{m_{a}}\cdots\widehat{m_{b}%
}\cdots m_{\ell},
\]
which shows that $m_{1}m_{2}\cdots m_{\ell}$ is not a reduced expression. This
contradicts the fact that $m_{1}m_{2}\cdots m_{\ell}$ is a reduced
expression.\ This contradiction shows that Case 1 is impossible.

\item An analogous argument shows that Case 2 is impossible (since $p_{1}%
p_{2}\cdots p_{i}$ is a reduced expression).

\item Similarly, Case 3 is impossible (since $q_{1}q_{2}\cdots q_{j}$ is a
reduced expression).

\item Assume that we are in Case 4. Thus, only one of the redundant factors
has the form $m_{a}$ for some $a\in\left\{  1,2,\ldots,\ell\right\}  $. Hence,
the other redundant factor cannot have the form $m_{b}$, and thus must have
the form $p_{b}$ or $q_{b}$ for some $b$. We may assume without loss of
generality that it has the form $p_{b}$ (since the other case is analogous).
Consider these $a$ and $b$. Thus,%
\begin{align*}
&  p_{1}p_{2}\cdots p_{i}m_{1}m_{2}\cdots m_{\ell}q_{1}q_{2}\cdots q_{j}\\
&  =p_{1}p_{2}\cdots\widehat{p_{b}}\cdots p_{i}m_{1}m_{2}\cdots\widehat{m_{a}%
}\cdots m_{\ell}q_{1}q_{2}\cdots q_{j}%
\end{align*}
(since the removal of the redundant factors leaves the product \newline%
$p_{1}p_{2}\cdots p_{i}m_{1}m_{2}\cdots m_{\ell}q_{1}q_{2}\cdots q_{j}$
unchanged). Hence,%
\begin{align*}
xmy  &  =p_{1}p_{2}\cdots p_{i}m_{1}m_{2}\cdots m_{\ell}q_{1}q_{2}\cdots
q_{j}\\
&  =\underbrace{p_{1}p_{2}\cdots\widehat{p_{b}}\cdots p_{i}}_{\substack{\in
W_{P}\\\text{(since all the }p_{k}\text{ belong to }P\text{)}}}m_{1}%
m_{2}\cdots\widehat{m_{a}}\cdots m_{\ell}\underbrace{q_{1}q_{2}\cdots q_{j}%
}_{\substack{\in W_{Q}\\\text{(since all the }q_{k}\text{ belong to }%
Q\text{)}}}\\
&  \in W_{P}\left(  m_{1}m_{2}\cdots\widehat{m_{a}}\cdots m_{\ell}\right)
W_{Q},
\end{align*}
so that%
\[
W_{P}\left(  m_{1}m_{2}\cdots\widehat{m_{a}}\cdots m_{\ell}\right)
W_{Q}=\underbrace{W_{P}x}_{\substack{=W_{P}\\\text{(since }x\in W_{P}\text{)}%
}}m\underbrace{yW_{Q}}_{\substack{=W_{Q}\\\text{(since }y\in W_{Q}\text{)}%
}}=W_{P}mW_{Q}=W_{P}wW_{Q}%
\]
(since $m\in W_{P}wW_{Q}$). Therefore, $m_{1}m_{2}\cdots\widehat{m_{a}}\cdots
m_{\ell}\in W_{P}wW_{Q}$. But this element $m_{1}m_{2}\cdots\widehat{m_{a}%
}\cdots m_{\ell}$ clearly has length $\leq\ell-1<\ell=\ell\left(  m\right)  $,
and thus cannot belong to $W_{P}wW_{Q}$ (since $m$ is an element of minimum
length in $W_{P}wW_{Q}$). This contradicts $m_{1}m_{2}\cdots\widehat{m_{a}%
}\cdots m_{\ell}\in W_{P}wW_{Q}$. This contradiction shows that Case 4 is impossible.
\end{itemize}

Thus we know that Cases 1, 2, 3 and 4 are all impossible. Hence, we must be in
Case 5. In other words, one of the redundant factors has the form $p_{a}$ for
some $a\in\left\{  1,2,\ldots,i\right\}  $, while the other has the form
$q_{b}$ for some $b\in\left\{  1,2,\ldots,j\right\}  $. Consider these $a$ and
$b$. Therefore,%
\begin{align*}
&  p_{1}p_{2}\cdots p_{i}m_{1}m_{2}\cdots m_{\ell}q_{1}q_{2}\cdots q_{j}\\
&  =p_{1}p_{2}\cdots\widehat{p_{a}}\cdots p_{i}m_{1}m_{2}\cdots m_{\ell}%
q_{1}q_{2}\cdots\widehat{q_{b}}\cdots q_{j}%
\end{align*}
(since the removal of the redundant factors leaves the product \newline%
$p_{1}p_{2}\cdots p_{i}m_{1}m_{2}\cdots m_{\ell}q_{1}q_{2}\cdots q_{j}$
unchanged). Hence,%
\begin{align*}
xmy  &  =p_{1}p_{2}\cdots p_{i}m_{1}m_{2}\cdots m_{\ell}q_{1}q_{2}\cdots
q_{j}\\
&  =p_{1}p_{2}\cdots\widehat{p_{a}}\cdots p_{i}\underbrace{m_{1}m_{2}\cdots
m_{\ell}}_{=m}q_{1}q_{2}\cdots\widehat{q_{b}}\cdots q_{j}\\
&  =p_{1}p_{2}\cdots\widehat{p_{a}}\cdots p_{i}\cdot m\cdot q_{1}q_{2}%
\cdots\widehat{q_{b}}\cdots q_{j}.
\end{align*}
If we set $x^{\prime}:=p_{1}p_{2}\cdots\widehat{p_{a}}\cdots p_{i}$ and
$y^{\prime}:=q_{1}q_{2}\cdots\widehat{q_{b}}\cdots q_{j}$, then this rewrites
as $xmy=x^{\prime}my^{\prime}$. Furthermore, $x^{\prime}\in W_{P}$ (since all
the $p_{k}$ belong to $P$) and $y^{\prime}\in W_{Q}$ (likewise) and
$x^{\prime}\leq x$ (by Proposition \ref{prop.bruhat-redex2}, since $x^{\prime
}=p_{1}p_{2}\cdots\widehat{p_{a}}\cdots p_{i}$ is a subproduct\ of the reduced
expression $p_{1}p_{2}\cdots p_{i}$ of $x$). Furthermore, $\ell\left(
x^{\prime}\right)  <\ell\left(  x\right)  $ (since $x^{\prime}=p_{1}%
p_{2}\cdots\widehat{p_{a}}\cdots p_{i}$ clearly has length $\leq
i-1<i=\ell\left(  x\right)  $) and therefore $x^{\prime}\neq x$, so that
$x^{\prime}<x$ (since $x^{\prime}\leq x$). Similarly, $y^{\prime}<y$. Hence,
we have found $x^{\prime}\in W_{P}$ and $y^{\prime}\in W_{Q}$ such that
$x^{\prime}<x$ and $y^{\prime}<y$ and $xmy=x^{\prime}my^{\prime}$. This proves
part \textbf{(d)}. \medskip

\textbf{(c)} Since $m \in W_{P}wW_{Q}$, it will suffice to show that all $u
\in W_{P} w W_{Q}$ satisfy $m \leq u$. So let us show this.

Let $u\in W_{P}wW_{Q}$. Part \textbf{(b)} shows that there exists a
factorization $u=xmy$ of $u$ with $x\in W_{P}$ and $y\in W_{Q}$. Let us pick
one such factorization such that $\ell\left(  x\right)  $ is minimal among all
such factorizations. The minimality of $\ell\left(  x\right)  $ shows that if
$x^{\prime}\in W_{P}$ and $y^{\prime}\in W_{Q}$ satisfy $u=x^{\prime
}my^{\prime}$, then%
\begin{equation}
\ell\left(  x^{\prime}\right)  \geq\ell\left(  x\right)  .
\label{pf.prop.double-coset.b.1}%
\end{equation}

If we had $\ell\left(  xmy\right)  \neq\ell\left(  x\right)  +\ell\left(
m\right)  +\ell\left(  y\right)  $, then part \textbf{(d)} would yield that
there exist $x^{\prime}\in W_{P}$ and $y^{\prime}\in W_{Q}$ such that
$x^{\prime}<x$ and $y^{\prime}<y$ and $xmy=x^{\prime}my^{\prime}$. This would
entail $u=xmy=x^{\prime}my^{\prime}$ and $\ell\left(  x^{\prime}\right)
<\ell\left(  x\right)  $ (by Proposition \ref{prop.bruhat.props1}
\textbf{(b)}, since $x^{\prime}<x$), which would contradict
(\ref{pf.prop.double-coset.b.1}). Thus, we cannot have $\ell\left(
xmy\right)  \neq\ell\left(  x\right)  +\ell\left(  m\right)  +\ell\left(
y\right)  $. Hence, we have $\ell\left(  xmy\right)  =\ell\left(  x\right)
+\ell\left(  m\right)  +\ell\left(  y\right)  $.

Let $p_{1}p_{2}\cdots p_{i}$ and $q_{1}q_{2}\cdots q_{j}$ and $m_{1}%
m_{2}\cdots m_{\ell}$ be reduced expressions for $x$ and $y$ and $m$,
respectively, where all the $p_{k}$ belong to $P$ (since $x\in W_{P}$), and
where all the $q_{k}$ belong to $Q$ (since $y\in W_{Q}$). Then, $\ell\left(
x\right)  =i$ and $\ell\left(  y\right)  =j$ and $\ell\left(  m\right)  =\ell
$, and furthermore%
\[
xmy=p_{1}p_{2}\cdots p_{i}m_{1}m_{2}\cdots m_{\ell}q_{1}q_{2}\cdots q_{j}%
\]
is a reduced expression for $xmy$ (since $\ell\left(  xmy\right)
=\underbrace{\ell\left(  x\right)  }_{=i}+\underbrace{\ell\left(  m\right)
}_{=\ell}+\underbrace{\ell\left(  y\right)  }_{=j}=i+\ell+j$). But removing
all the factors $p_{1},p_{2},\ldots,p_{i}$ and $q_{1},q_{2},\ldots,q_{j}$ from
this expression yields $m_{1}m_{2}\cdots m_{\ell}=m$. Thus, $m$ is a
subproduct\ of this reduced expression. Hence, by Proposition
\ref{prop.bruhat-redex2}, we obtain $m\leq xmy=u$. As we explained above, this
completes the proof of part \textbf{(c)}. \medskip

\textbf{(a)} We have already proved Proposition \ref{prop.double-coset}
\textbf{(c)} under the condition that $m$ is \textbf{any} element of minimum
length in $W_{P}wW_{Q}$. That is, we have shown that any element of minimum
length in $W_{P}wW_{Q}$ is a Bruhat minimum of $W_{P} w W_{Q}$. Hence, if the
set $W_{P} w W_{Q}$ had two distinct elements of minimum length, then it would
have two distinct Bruhat minima, which is absurd (since the Bruhat minimum of
a given subset $X$ of $W$ is unique when it exists). Thus, the set $W_{P} w
W_{Q}$ has at most one element of minimum length. Hence, this set has a unique
element of minimum length (since it is nonempty). This proves part
\textbf{(a)}.
\end{proof}

\section{Proof of Theorem \ref{thm.triple}}

Now, we prove our first main result:

\begin{proof}
[Proof of Theorem \ref{thm.triple}.]Proposition \ref{prop.double-coset}
\textbf{(a)} shows that the double coset $W_{I}uW_{J}$ has a unique element
$u^{\prime}$ of minimum length. Similarly, the double coset $W_{J}vW_{K}$ has
a unique element $v^{\prime}$ of minimum length. Consider these $u^{\prime}$
and $v^{\prime}$. Clearly, $W_{I}uW_{J}=W_{I}u^{\prime}W_{J}$ (since
$u^{\prime}\in W_{I}uW_{J}$) and $W_{J}vW_{K}=W_{J}v^{\prime}W_{K}$ (since
$v^{\prime}\in W_{J}vW_{K}$), so that
\[
\underbrace{W_{I}uW_{J}}_{=W_{I}u^{\prime}W_{J}}vW_{K}=W_{I}u^{\prime
}\underbrace{W_{J}vW_{K}}_{=W_{J}v^{\prime}W_{K}}=W_{I}u^{\prime}%
W_{J}v^{\prime}W_{K}.
\]
Hence,
\[
u^{\prime}v^{\prime}=\underbrace{1}_{\in W_{I}}u^{\prime}\underbrace{1}_{\in
W_{J}}v^{\prime}\underbrace{1}_{\in W_{K}}\in W_{I}u^{\prime}W_{J}v^{\prime
}W_{K}=W_{I}uW_{J}vW_{K}.
\]

Proposition \ref{prop.double-coset} \textbf{(a)} shows that the double coset
$W_{I}u^{\prime}v^{\prime}W_{K}$ has a unique element $m$ of minimum length.
Consider this $m$. Thus,%
\[
m\in W_{I}\underbrace{u^{\prime}v^{\prime}}_{=u^{\prime}1v^{\prime}}%
W_{K}=W_{I}u^{\prime}\underbrace{1}_{\in W_{J}}v^{\prime}W_{K}\subseteq
W_{I}u^{\prime}W_{J}v^{\prime}W_{K}=W_{I}uW_{J}vW_{K}.
\]

Proposition \ref{prop.double-coset} \textbf{(c)} shows that $m$ is the Bruhat
minimum of $W_{I}u^{\prime}v^{\prime}W_{K}$. Thus, each $r\in W_{I}u^{\prime
}v^{\prime}W_{K}$ satisfies $m\leq r$.

We shall now prove that every $p\in W_{I}uW_{J}vW_{K}$ satisfies $m\leq p$.
Once this is proved, it will follow that $m$ is the Bruhat minimum of
$W_{I}uW_{J}vW_{K}$ (since $m\in W_{I}uW_{J}vW_{K}$), and thus Theorem
\ref{thm.triple} will follow.

First, let us show the following:

\begin{statement}
\textit{Claim 1:} For every $x\in W_{I}$ and $y\in W_{J}$ and $z\in W_{K}$, we
have $m\leq xu^{\prime}yv^{\prime}z$.
\end{statement}

\begin{proof}
[Proof of Claim 1.]We proceed by strong induction on $\ell\left(  y\right)  $:

\textit{Base case:} It is easy to see that Claim 1 holds for $\ell\left(
y\right)  =0$. Indeed, if $\ell\left(  y\right)  =0$, then $y=1$ and thus
$xu^{\prime}\underbrace{y}_{=1}v^{\prime}z=\underbrace{x}_{\in W_{I}}%
u^{\prime}v^{\prime}\underbrace{z}_{\in W_{K}}\in W_{I}u^{\prime}v^{\prime
}W_{K}$, so that $m\leq xu^{\prime}yv^{\prime}z$ (since each $r\in
W_{I}u^{\prime}v^{\prime}W_{K}$ satisfies $m\leq r$).

\textit{Induction step:}\ Let $N$ be a positive integer. Assume (as the
induction hypothesis) that Claim 1 holds whenever $\ell\left(  y\right)  <N$.
We must now prove it for $\ell\left(  y\right)  =N$.

So let $x\in W_{I}$ and $y\in W_{J}$ and $z\in W_{K}$ be such that
$\ell\left(  y\right)  =N$. We must show that $m\leq xu^{\prime}yv^{\prime}z$.

Since $y\in W_{J}$ and $\ell\left(  y\right)  =N>0$, we can write $y$ as
$y=\overline{y}j$ for some $j\in J$ and some $\overline{y}\in W_{J}$
satisfying $\ell\left(  \overline{y}\right)  =\ell\left(  y\right)  -1$
(indeed, pick any reduced expression for $y$, and let $j$ be its last entry,
while $\overline{y}$ is the rest of this expression). Consider these $j$ and
$\overline{y}$. Of course, $j\in J\subseteq W_{J}$ and $\ell\left(  j\right)
=1$ (since $j\in J\subseteq S$) and $\ell\left(  \overline{y}\right)
+1=\ell\left(  y\right)  $ (since $\ell\left(  \overline{y}\right)
=\ell\left(  y\right)  -1$). Note also that $\ell\left(  \overline{y}\right)
=\ell\left(  y\right)  -1<\ell\left(  y\right)  =N$. Hence, our induction
hypothesis (applied to $\overline{y}$ instead of $y$) yields
\begin{equation}
m\leq xu^{\prime}\overline{y}v^{\prime}z. \label{pf.thm.triple.IH1}%
\end{equation}

We are in one of the following three cases:

\textit{Case 1:} We have $\ell\left(  xu^{\prime}y\right)  \neq\ell\left(
x\right)  +\ell\left(  u^{\prime}\right)  +\ell\left(  y\right)  $.

\textit{Case 2:} We have $\ell\left(  jv^{\prime}z\right)  \neq\ell\left(
j\right)  +\ell\left(  v^{\prime}\right)  +\ell\left(  z\right)  $.

\textit{Case 3:} None of the above holds.

Let us first consider Case 1. In this case, we have $\ell\left(  xu^{\prime
}y\right)  \neq\ell\left(  x\right)  +\ell\left(  u^{\prime}\right)
+\ell\left(  y\right)  $. Hence, Proposition \ref{prop.double-coset}
\textbf{(d)} (applied to $I$, $J$, $u$ and $u^{\prime}$ instead of $P$, $Q$,
$w$ and $m$) yields that there exist $x^{\prime}\in W_{I}$ and $y^{\prime}\in
W_{J}$ such that $x^{\prime}<x$ and $y^{\prime}<y$ and $xu^{\prime}%
y=x^{\prime}u^{\prime}y^{\prime}$. Consider these $x^{\prime}$ and $y^{\prime
}$. From $y^{\prime}<y$, we use Proposition \ref{prop.bruhat.props1}
\textbf{(b)} to obtain $\ell\left(  y^{\prime}\right)  <\ell\left(  y\right)
=N$. Thus, our induction hypothesis (applied to $x^{\prime}$ and $y^{\prime}$
instead of $x$ and $y$) yields $m\leq x^{\prime}u^{\prime}y^{\prime}v^{\prime
}z$. In view of $xu^{\prime}y=x^{\prime}u^{\prime}y^{\prime}$, we can rewrite
this as $m\leq xu^{\prime}yv^{\prime}z$. Thus, the induction step is complete
in Case 1.

Let us next consider Case 2. In this case, we have $\ell\left(  jv^{\prime
}z\right)  \neq\ell\left(  j\right)  +\ell\left(  v^{\prime}\right)
+\ell\left(  z\right)  $. Hence, Proposition \ref{prop.double-coset}
\textbf{(d)} (applied to $J$, $K$, $v$, $v^{\prime}$, $j$ and $z$ instead of
$P$, $Q$, $w$, $m$, $x$ and $y$) yields that there exist $x^{\prime}\in W_{J}$
and $y^{\prime}\in W_{K}$ such that $x^{\prime}<j$ and $y^{\prime}<z$ and
$jv^{\prime}z=x^{\prime}v^{\prime}y^{\prime}$ (since $j\in W_{J}$). Consider
these $x^{\prime}$ and $y^{\prime}$. From $x^{\prime}<j$, we use Proposition
\ref{prop.bruhat.props1} \textbf{(b)} to obtain $\ell\left(  x^{\prime
}\right)  <\ell\left(  j\right)  =1$ and thus $\ell\left(  x^{\prime}\right)
=0$. Hence, $x^{\prime}=1$. Thus, $jv^{\prime}z=x^{\prime}v^{\prime}y^{\prime
}$ rewrites as $jv^{\prime}z=v^{\prime}y^{\prime}$.

But our induction hypothesis (applied to $\overline{y}$ and $y^{\prime}$
instead of $y$ and $z$) yields $m\leq xu^{\prime}\overline{y}v^{\prime
}y^{\prime}$ (since $\overline{y}\in W_{J}$ and $y^{\prime}\in W_{K}$ and
$\ell\left(  \overline{y}\right)  <N$). In view of $\underbrace{y}%
_{=\overline{y}j}v^{\prime}z=\overline{y}\underbrace{jv^{\prime}z}%
_{=v^{\prime}y^{\prime}}=\overline{y}v^{\prime}y^{\prime}$, we can rewrite
this as $m\leq xu^{\prime}yv^{\prime}z$. Thus, the induction step is complete
in Case 2.

Finally, let us consider Case 3. In this case, none of the assumptions of the
previous two cases hold. Thus, we have%
\[
\ell\left(  xu^{\prime}y\right)  =\ell\left(  x\right)  +\ell\left(
u^{\prime}\right)  +\ell\left(  y\right)  \ \ \ \ \ \ \ \ \ \ \text{and}%
\ \ \ \ \ \ \ \ \ \ \ell\left(  jv^{\prime}z\right)  =\ell\left(  j\right)
+\ell\left(  v^{\prime}\right)  +\ell\left(  z\right)  .
\]
Furthermore, the triangle inequality for lengths (i.e., the fact that
$\ell\left(  ab\right)  \leq\ell\left(  a\right)  +\ell\left(  b\right)  $ for
all $a,b\in W$) yields%
\[
\ell\left(  xu^{\prime}\overline{y}\right)  \leq\ell\left(  x\right)
+\ell\left(  u^{\prime}\right)  +\ell\left(  \overline{y}\right)
\ \ \ \ \ \ \ \ \ \ \text{and}\ \ \ \ \ \ \ \ \ \ \ell\left(  v^{\prime
}z\right)  \leq\ell\left(  v^{\prime}\right)  +\ell\left(  z\right)  .
\]

Hence,
\[
\ell\left(  xu^{\prime}\overline{y}\right)  \leq\ell\left(  x\right)
+\ell\left(  u^{\prime}\right)  +\underbrace{\ell\left(  \overline{y}\right)
}_{<\ell\left(  y\right)  }<\ell\left(  x\right)  +\ell\left(  u^{\prime
}\right)  +\ell\left(  y\right)  =\ell\left(  xu^{\prime}\underbrace{y}%
_{=\overline{y}j}\right)  =\ell\left(  xu^{\prime}\overline{y}j\right)  .
\]
Since $j$ is a reflection (because $j\in J\subseteq S$), this entails
$xu^{\prime}\overline{y}<xu^{\prime}\overline{y}j$ (by Proposition
\ref{prop.bruhat.reflec} \textbf{(b)}, applied to $w=xu^{\prime}\overline{y}$
and $t=j$).

Furthermore,%
\[
\ell\left(  jv^{\prime}z\right)  =\underbrace{\ell\left(  j\right)  }%
_{=1>0}+\,\ell\left(  v^{\prime}\right)  +\ell\left(  z\right)  >\ell\left(
v^{\prime}\right)  +\ell\left(  z\right)  \geq\ell\left(  v^{\prime}z\right)
\]
(since $\ell\left(  v^{\prime}z\right)  \leq\ell\left(  v^{\prime}\right)
+\ell\left(  z\right)  $), so that $\ell\left(  v^{\prime}z\right)
<\ell\left(  jv^{\prime}z\right)  $. Since $j$ is a reflection, this entails
$v^{\prime}z<jv^{\prime}z$ (by Proposition \ref{prop.bruhat.reflec}
\textbf{(a)}, applied to $w=v^{\prime}z$ and $t=j$).

Now we know that $j$ is a reflection and we have $xu^{\prime}\overline
{y}<xu^{\prime}\overline{y}j$ and $v^{\prime}z<jv^{\prime}z$. Thus, Lemma
\ref{lem.bjobre22} (applied to $xu^{\prime}\overline{y}$, $v^{\prime}z$ and
$j$ instead of $x$, $y$ and $t$) yields $xu^{\prime}\overline{y}v^{\prime
}z<xu^{\prime}\overline{y}jv^{\prime}z$. Now, (\ref{pf.thm.triple.IH1})
becomes
\[
m\leq xu^{\prime}\overline{y}v^{\prime}z<xu^{\prime}\underbrace{\overline{y}%
j}_{=y}v^{\prime}z=xu^{\prime}yv^{\prime}z.
\]
Thus, the induction step is complete in Case 3.

We have now completed the induction step in all three cases. Thus, the
induction is complete, and Claim 1 is proved.
\end{proof}

Now, every $p\in W_{I}uW_{J}vW_{K}$ can be written in the form $p=xu^{\prime
}yv^{\prime}z$ for some $x\in W_{I}$ and $y\in W_{J}$ and $z\in W_{K}$
(because $p\in W_{I}uW_{J}vW_{K}=W_{I}u^{\prime}W_{J}v^{\prime}W_{K}$), and
thus satisfies $m\leq p$ (since Claim 1 yields $m\leq xu^{\prime}yv^{\prime}%
z$). Since we also know that $m\in W_{I}uW_{J}vW_{K}$, we thus conclude that
$m$ is the Bruhat minimum of the set $W_{I}uW_{J}vW_{K}$. Hence, Theorem
\ref{thm.triple} is proved.
\end{proof}

\section{Reminders on the longest element}

Next, we recall the definition of the so-called \emph{longest element} (aka
\emph{longest word}) in a finite Coxeter group:

\begin{proposition}
\label{prop.w0-exists} Let $W$ be a finite Coxeter group. Then, there is a
unique element $w\in W$ whose length $\ell\left(  w\right)  $ is maximum.
\end{proposition}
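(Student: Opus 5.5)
Since $W$ is finite, the set $\left\{ \ell\left( w\right) \ \mid\ w\in W\right\}$ is a finite nonempty set of nonnegative integers. It therefore has a maximum, and some $w_{0}\in W$ attains it. Existence is thus trivial, and the whole work lies in uniqueness.

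For uniqueness, I would first show that any element $w_{0}$ of maximum length satisfies $\ell\left( w_{0}s\right) <\ell\left( w_{0}\right)$ for every $s\in S$. Indeed, $\ell\left( w_{0}s\right) =\ell\left( w_{0}\right) \pm1$, and the $+$ sign would contradict maximality. The parity fact $\ell\left( ws\right) \neq\ell\left( w\right)$ is standard, coming from the sign homomorphism $W\rightarrow\left\{ \pm1\right\}$, $s\mapsto-1$. Since $\left\vert \ell\left( ws\right) -\ell\left( w\right) \right\vert \leq1$ trivially, this gives $\ell\left( ws\right) =\ell\left( w\right) \pm1$.

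Next I would show that $w_{0}$ is a Bruhat maximum of $W$, meaning $x\leq w_{0}$ for all $x\in W$. I would argue by downward induction on $\ell\left( x\right)$, or more cleanly as follows. I claim that every $x\in W$ satisfies $\ell\left( x\right) +\ell\left( x^{-1}w_{0}\right) =\ell\left( w_{0}\right)$. To prove it, take a reduced expression $s_{1}s_{2}\cdots s_{k}$ of $x^{-1}$ and induct on $k$. At each step I need that an element $y$ with $\ell\left( y\right) +\ell\left( y^{-1}w_{0}\right) =\ell\left( w_{0}\right)$ and $\ell\left( sy\right) >\ell\left( y\right)$ still satisfies the identity for $sy$. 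That is exactly where the descent property of $w_{0}$ enters, via the exchange/deletion property (Theorem \ref{thm.del}): every $s\in S$ is a left descent of every $v\leq w_{0}$-suffix of this kind.

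A simpler route avoids this. Let $x\in W$ and take a reduced expression $x=s_{1}\cdots s_{k}$. Then $w_{0}$ has all $s\in S$ as right descents, and by the symmetric argument with $\ell\left( sw_{0}\right)$, all $s$ as left descents. So I can write $w_{0}=s_{1}w_{1}$ with $\ell\left( w_{1}\right) =\ell\left( w_{0}\right) -1$, where $w_{1}=s_{1}w_{0}$. The key step is then to show that $s_{2}$ is a left descent of $w_{1}$ whenever $\ell\left( s_{1}s_{2}\right) =2$, and so on. This is where I expect the main obstacle: propagating the descent property down requires the exchange condition, namely that $\ell\left( sw\right) <\ell\left( w\right)$ holds if and only if some reduced expression of $w$ starts with $s$. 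I would use Theorem \ref{thm.del} to obtain this.

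Concretely, I would prove the claim that $\ell\left( xw\right) =\ell\left( w\right) -\ell\left( x\right)$ for $w$ a maximal-length element, by induction on $\ell\left( x\right)$. Write $x=sx'$ with $\ell\left( x'\right) =\ell\left( x\right) -1$. Then $x'w$ has length $\ell\left( w\right) -\ell\left( x'\right)$. Suppose $\ell\left( sx'w\right) >\ell\left( x'w\right)$. Concatenating a reduced word of $x'^{-1}$ with one of $sx'w$ gives an expression for $w$ of length $\ell\left( w\right) -\ell\left( x'\right) +1+\ell\left( x'\right)$, which exceeds $\ell\left( w\right)$. Deletion then has to be analyzed to contradict the descent property, and this case analysis is the hard part.

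Granting the claim, uniqueness is immediate. Let $w,w'$ both have maximum length $L$, and apply the claim with $x=w'^{-1}$. This gives $\ell\left( w'^{-1}w\right) =L-L=0$, so $w=w'$.
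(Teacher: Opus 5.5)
\paragraph{Gap: the key length identity is never proved.} Your existence argument is fine. So is the observation that a maximal-length $w_0$ satisfies $\ell(sw_0)<\ell(w_0)$ and $\ell(w_0s)<\ell(w_0)$ for all $s\in S$. The final deduction of uniqueness from the identity $\ell(xw)=\ell(w)-\ell(x)$ is also correct.

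The gap is that this identity is never proved, and it carries the entire content of the statement. It is precisely Proposition \ref{prop.w0-l}, asserted for an arbitrary element of maximal length, which is stronger than what you are proving. You defer the ``case analysis'' as the hard part and then grant the claim. The intermediate assertions in your second and third paragraphs (about suffixes of $w_0$, and about $s_2$ being a left descent of $w_1$) are likewise only asserted.

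Moreover, the sketch you do give is miscomputed. Concatenating a reduced word of $x'^{-1}$ with one of $sx'w$ yields an expression for $x'^{-1}sx'w$, not for $w$. The correct concatenation is a reduced word of $x^{-1}=x'^{-1}s$ followed by one of $sx'w$, of length $\ell(w)+2$. Theorem \ref{thm.del} then only tells you that one letter can be deleted from each of the two reduced halves, which by itself contradicts nothing.

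\paragraph{How to repair it.} The simplest repair is the route you mention and then abandon: prove that $w_0$ is the Bruhat maximum of $W$ by \emph{upward} induction on $\ell(x)$. Use the lifting property, Lemma \ref{max.reflection} \textbf{(a)}; its proof does not involve longest elements, so there is no circularity.
\begin{itemize}
\item Maximality gives $\ell(sw_0)<\ell(w_0)$, hence $sw_0<w_0$ by Proposition \ref{prop.bruhat.reflec} \textbf{(a)}.
\item Then $1\leq w_0$, and if $x=sx'$ with $\ell(x')=\ell(x)-1$ and $x'\leq w_0$, the lifting property yields $x\leq w_0$.
\item Any other element of maximal length would then be $<w_0$, hence strictly shorter by Proposition \ref{prop.bruhat.props1} \textbf{(b)}, a contradiction.
\end{itemize}

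If you prefer to keep your length identity, the missing ingredient is the strong exchange condition for arbitrary reflections. This is the fact from Lusztig's Lemma 2.2 used in the second proof of Proposition \ref{prop.bruhat.reflec} \textbf{(a)}. Combine it with the observation that parity and maximality force $\ell(tw)<\ell(w)$ for \emph{every} reflection $t$.

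Take $t=x'^{-1}sx'$. Concatenating reduced words of $x'^{-1}$ and $x'w$ gives a reduced word of $w$, by the induction hypothesis. The element $tw=x'^{-1}s\cdot x'w$ is obtained by deleting one letter from this word. That letter cannot lie in the $x'^{-1}$ part, since this would give $\ell(x'^{-1}s)<\ell(x')$, although $\ell(x'^{-1}s)=\ell(sx')=\ell(x')+1$. So it lies in the $x'w$ part, which gives $\ell(sx'w)<\ell(x'w)$, as required.
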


\begin{proof}
This is (contained in) \cite[\S 11.1]{Lusztig} or \cite[Proposition
2.3.1]{BjoBre} or \cite[Proposition 80]{Schremmer} or \cite[Lemma
4.6.1]{Davis} or \cite[Lemma 1.33]{Sam21}.
\end{proof}

The unique element $w$ in Proposition~\ref{prop.w0-exists} is called the
\emph{longest element} of $W$. It is commonly denoted $w_{0}$. Of all its
properties, we will need the following one:

\begin{proposition}
\label{prop.w0-l} Let $W$ be a finite Coxeter group. Let $w_{0}$ be its
longest element. Then, each $u\in W$ satisfies $\ell\left(  uw_{0}\right)
=\ell\left(  w_{0}\right)  -\ell\left(  u\right)  $.
\end{proposition}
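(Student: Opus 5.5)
We have to prove that $\ell\left(uw_{0}\right)=\ell\left(w_{0}\right)-\ell\left(u\right)$ for each $u\in W$. The key intermediate fact is that $w_{0}$ has every simple reflection as a right descent: $\ell\left(w_{0}s\right)<\ell\left(w_{0}\right)$ for all $s\in S$. This holds because $\ell\left(w_{0}s\right)=\ell\left(w_{0}\right)\pm1$, and the plus sign would contradict the maximality of $\ell\left(w_{0}\right)$ in Proposition~\ref{prop.w0-exists}. By the symmetry $\ell\left(a^{-1}\right)=\ell\left(a\right)$, the element $w_{0}^{-1}$ also has maximum length, so uniqueness gives $w_{0}^{-1}=w_{0}$. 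Consequently $\ell\left(sw_{0}\right)=\ell\left(w_{0}^{-1}s\right)=\ell\left(w_{0}s\right)<\ell\left(w_{0}\right)$ as well.

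The fact $\ell\left(ws\right)=\ell\left(w\right)\pm1$ is standard: $\ell\left(ws\right)\leq\ell\left(w\right)+1$, symmetrically $\ell\left(w\right)\leq\ell\left(ws\right)+1$, and $\ell\left(ws\right)\neq\ell\left(w\right)$ by the parity homomorphism $W\to\left\{\pm1\right\}$, $s\mapsto-1$. If one prefers to avoid parity, the deletion property (Theorem~\ref{thm.del}) also works: if $s_{1}\cdots s_{k}$ is reduced for $w$ and $s_{1}\cdots s_{k}s$ is not reduced, deleting two factors shows $\ell\left(ws\right)\leq k-1$.

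Next I prove $\ell\left(uw_{0}\right)\leq\ell\left(w_{0}\right)-\ell\left(u\right)$ by induction on $\ell\left(u\right)$. The base case $u=1$ is trivial. For the step, write $u=s\overline{u}$ with $s\in S$ and $\ell\left(\overline{u}\right)=\ell\left(u\right)-1$. Then $uw_{0}=s\left(\overline{u}w_{0}\right)$, so
\[
\ell\left(uw_{0}\right)\leq\ell\left(\overline{u}w_{0}\right)+1\leq\ell\left(w_{0}\right)-\ell\left(u\right)+2 .
\]
This is too weak, so the plan needs a better mechanism, and I expect this to be the main obstacle.

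The cleaner route is to show that $w_{0}$ has a reduced expression beginning with any reduced expression of $u$, i.e.\ $\ell\left(w_{0}\right)=\ell\left(u\right)+\ell\left(u^{-1}w_{0}\right)$. For this, pick $v$ of maximal length among elements with $\ell\left(uv\right)=\ell\left(u\right)+\ell\left(v\right)$. If $uv\neq w_{0}$, then by uniqueness $uv$ is not of maximum length, so some $s$ satisfies $\ell\left(uvs\right)>\ell\left(uv\right)$. Otherwise $uv$ would have every simple reflection as a right descent, and such an element must be $w_{0}$: for any $x$, the rule that $\ell\left(wx\right)=\ell\left(w\right)-\ell\left(x\right)$ whenever all $s$ are right descents of $w$ follows by exchange arguments. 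Hence $\ell\left(uvs\right)=\ell\left(u\right)+\ell\left(vs\right)$ with $\ell\left(vs\right)=\ell\left(v\right)+1$, contradicting the maximality of $v$. So $uv=w_{0}$ with lengths adding, and $v=u^{-1}w_{0}$.

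Applying this to $u^{-1}$ gives $\ell\left(w_{0}\right)=\ell\left(u\right)+\ell\left(uw_{0}\right)$, which is the claim. The delicate point is justifying that an element with all right descents equals $w_{0}$. I would prove it directly: if every $s$ is a right descent of $w$, then every reduced word can be extended, via the exchange/deletion property, to show that $\ell\left(wx\right)=\ell\left(w\right)-\ell\left(x\right)$ for all $x$ by induction on $\ell\left(x\right)$. Then $\ell\left(w\right)\geq\ell\left(x\right)$ for all $x$, so $w$ is maximal, and uniqueness in Proposition~\ref{prop.w0-exists} gives $w=w_{0}$. Alternatively, I would simply cite \cite[Proposition 2.3.2]{BjoBre}.
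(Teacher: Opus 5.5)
The paper gives no argument of its own here. It simply cites the literature (e.g.\ \cite[\S 11.2 \textbf{(a)}]{Lusztig} and \cite[Proposition 2.3.2 (ii)]{BjoBre}), so your closing fallback is exactly the paper's route.

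Your self-contained argument, however, has a genuine gap, located precisely at the step you call delicate. That step is the claim that an element $w$ with $\ell(ws)<\ell(w)$ for all $s\in S$ satisfies $\ell(wx)=\ell(w)-\ell(x)$ for all $x$. ``Exchange arguments'' by induction on $\ell(x)$ do not deliver this as stated. Write $x=x's$ with $\ell(x)=\ell(x')+1$. The inductive step then needs $\ell(wx's)<\ell(wx')$, i.e.\ that $s$ is a right descent of $wx'$. Your hypothesis only concerns right descents of $w$ itself, so this is the same wall your first induction ran into.

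Moreover, this lemma applied to $w=w_0$ \emph{is} the proposition: you already showed every $s\in S$ is a right descent of $w_0$, and $\ell(uw_0)=\ell(w_0^{-1}u^{-1})=\ell(w_0u^{-1})$. So the detour through a length-maximal $v$ with $\ell(uv)=\ell(u)+\ell(v)$ is logically fine, but it merely relocates the whole difficulty into an unproved step.

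The missing idea is to play the maximality of $\ell(w_0)$ against \emph{all} reflections, not only simple ones. You then need the strong exchange property \cite[Theorem 1.4.3]{BjoBre} (in its right-handed form, obtained by inverting) rather than the deletion property.

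\begin{enumerate}
\item For any reflection $t$, maximality gives $\ell(w_0t)\leq\ell(w_0)$. Your parity homomorphism gives $\ell(w_0t)\neq\ell(w_0)$, since a reflection is conjugate to a simple reflection and so maps to $-1$. Hence $\ell(w_0t)<\ell(w_0)$.
\item Induct on $\ell(x)$. Assume $\ell(w_0x')=\ell(w_0)-\ell(x')$, and let $x=x's$ with $\ell(x)=\ell(x')+1$.
\item Concatenating reduced words for $w_0x'$ and $x'^{-1}$ gives a reduced word for $w_0$. Put $t:=x'sx'^{-1}$. Strong exchange says that $w_0t=(w_0x')\cdot sx'^{-1}$ arises from this word by deleting a single letter.
\item A deletion in the $x'^{-1}$ block would force $\ell(sx'^{-1})\leq\ell(x')-1$. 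This contradicts $\ell(sx'^{-1})=\ell(x's)=\ell(x')+1$.
\item Hence the deletion is in the $w_0x'$ block, giving $\ell(w_0x's)<\ell(w_0x')$. Therefore $\ell(w_0x)=\ell(w_0)-\ell(x)$, the reverse inequality being the triangle inequality.
\end{enumerate}

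Taking $x=u^{-1}$ and using $w_0^{-1}=w_0$ yields the claim. The exchange with the non-simple reflection $t$ is exactly what your sketch lacked.
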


\begin{proof}
This is \cite[\S 11.2 \textbf{(a)}]{Lusztig} or \cite[Proposition 80
(c)]{Schremmer} or \cite[Proposition 2.3.2 (ii)]{BjoBre}. (It also follows
easily from \cite[Lemma 4.6.1 (a)]{Davis}.)
\end{proof}

\begin{noncompile}
\begin{proposition}
\label{prop.w0} Let $W$ be a finite Coxeter group with Coxeter system $\left(
W,S\right)  $. Then:

\begin{enumerate}
\item[\textbf{(a)}] There is a unique element $w\in W$ whose length
$\ell\left(  w\right)  $ is maximum.
\end{enumerate}

Let us denote this element by $w_{0}$. It is called the \emph{longest element}
of $W$. It has the following properties:

\begin{enumerate}
\item[\textbf{(b)}] We have $w_{0}^{2}=1$, that is, $w_{0}^{-1}=w_{0}$.

\item[\textbf{(c)}] Each $u\in W$ satisfies $u\leq w_{0}$.

\item[\textbf{(d)}] Each $u\in W$ satisfies $\ell\left(  uw_{0}\right)
=\ell\left(  w_{0}u\right)  =\ell\left(  w_{0}\right)  -\ell\left(  u\right)
$.

\item[\textbf{(e)}] Each $u\in W$ satisfies $\ell\left(  w_{0}uw_{0}%
^{-1}\right)  =\ell\left(  u\right)  $.

\item[\textbf{(f)}] We have $w_{0}Sw_{0}^{-1}=S$.

\item[\textbf{(g)}] For any $u,v\in W$, we have the logical equivalences%
\[
\left(  u\leq v\right)  \ \Longleftrightarrow\ \left(  w_{0}u\geq
w_{0}v\right)  \ \Longleftrightarrow\ \left(  uw_{0}\geq vw_{0}\right)  .
\]

\end{enumerate}
\end{proposition}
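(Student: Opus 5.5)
Parts \textbf{(a)} through \textbf{(f)} follow from Propositions \ref{prop.w0-exists} and \ref{prop.w0-l} with little work. Part \textbf{(g)} needs a description of the Bruhat order by reflections, and I expect it to be the main obstacle.

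\textbf{(a)} This is exactly Proposition \ref{prop.w0-exists}.

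\textbf{(b)} Since $\ell\left(w_0^{-1}\right)=\ell\left(w_0\right)$ is maximum, the uniqueness in \textbf{(a)} forces $w_0^{-1}=w_0$.

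\textbf{(d)} The equality $\ell\left(uw_0\right)=\ell\left(w_0\right)-\ell\left(u\right)$ is Proposition \ref{prop.w0-l}. For the other one, I would use $\ell(a)=\ell(a^{-1})$ together with \textbf{(b)}:
\[
\ell\left(w_0u\right)=\ell\left(u^{-1}w_0\right)=\ell\left(w_0\right)-\ell\left(u^{-1}\right)=\ell\left(w_0\right)-\ell\left(u\right).
\]

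\textbf{(c)} Write $w_0=u\cdot\left(u^{-1}w_0\right)$. By \textbf{(d)}, $\ell\left(u^{-1}w_0\right)=\ell\left(w_0\right)-\ell\left(u\right)$. Hence concatenating a reduced expression of $u$ with one of $u^{-1}w_0$ gives a reduced expression of $w_0$. This expression has $u$ as a prefix subproduct, so $u\leq w_0$ by Proposition \ref{prop.bruhat-redex2}.

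\textbf{(e)} Apply \textbf{(d)} twice:
\[
\ell\left(w_0uw_0\right)=\ell\left(w_0\right)-\ell\left(uw_0\right)=\ell\left(w_0\right)-\left(\ell\left(w_0\right)-\ell\left(u\right)\right)=\ell\left(u\right).
\]

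\textbf{(f)} For $s\in S$, part \textbf{(e)} gives $\ell\left(w_0sw_0^{-1}\right)=1$. Elements of length $1$ are exactly the simple reflections, so $w_0Sw_0^{-1}\subseteq S$. Equality follows because conjugation is a bijection and $S$ is finite, or directly because $w_0$ is an involution.

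\textbf{(g)} I would first show that $u\mapsto w_0uw_0$ is an automorphism of the Bruhat order. It is a group automorphism mapping $S$ onto $S$ by \textbf{(f)}, so it preserves lengths and sends reduced expressions to reduced expressions and subproducts to subproducts. Proposition \ref{prop.bruhat-redex2} then shows it preserves $\leq$ in both directions. Because $uw_0=w_0\left(w_0uw_0\right)$, the right-multiplication equivalence reduces to the left one. So it remains to prove $u\leq v\Longleftrightarrow w_0u\geq w_0v$.

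For this I would use the description (\cite[Definition 2.1.1]{BjoBre}, equivalent to Proposition \ref{prop.bruhat-redex2}) of $\leq$ as the reflexive-transitive closure of the relations $w<tw$, where $t$ is a reflection and $\ell\left(w\right)<\ell\left(tw\right)$. Proposition \ref{prop.bruhat.reflec} \textbf{(a)} supplies one direction of this description, but I would cite the full equivalence. Given one such step $w<tw$, put $t^{\prime}=w_0tw_0$, which is again a reflection. Then $w_0\left(tw\right)=t^{\prime}\left(w_0w\right)$, and by \textbf{(d)} we get $\ell\left(w_0tw\right)=\ell\left(w_0\right)-\ell\left(tw\right)<\ell\left(w_0\right)-\ell\left(w\right)=\ell\left(w_0w\right)$. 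Proposition \ref{prop.bruhat.reflec} \textbf{(a)}, applied to $w_0tw$ and $t^{\prime}$, then yields $w_0tw<w_0w$. So left multiplication by $w_0$ reverses every generating relation, hence reverses $\leq$. The converse follows by applying the same argument to $w_0u$ and $w_0v$, using \textbf{(b)}.
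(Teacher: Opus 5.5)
Your proof is correct. The paper's own proof is mostly citations: it cites the literature for \textbf{(a)}, \textbf{(c)}, \textbf{(d)} and \textbf{(g)}, and derives only \textbf{(b)}, \textbf{(e)}, \textbf{(f)}, essentially as you do. (For \textbf{(e)} it applies $\ell(xw_0)=\ell(w_0x)$ to $x=w_0u$ rather than using \textbf{(d)} twice, which is an inessential difference.)

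The genuinely different argument is in \textbf{(c)}. The paper's in-text derivation goes through \textbf{(g)}: from $1\leq w_0u$ it gets $u=w_0(w_0u)\leq w_0\cdot 1=w_0$. You instead use the length additivity $\ell(w_0)=\ell(u)+\ell(u^{-1}w_0)$ and the subword criterion. Your route keeps \textbf{(c)} independent of \textbf{(g)}, and hence of the nontrivial equivalence between Proposition \ref{prop.bruhat-redex2} and the reflection-chain definition of $\leq$. The paper's route is a one-liner, but only once \textbf{(g)} is available.

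For \textbf{(g)} you supply the standard textbook argument (Bj\"orner--Brenti, Proposition 2.3.4) that the paper only cites. You rightly flag that it needs the harder direction of that equivalence, which the paper already invokes in Section 1. Two small simplifications are available:
\begin{itemize}
\item If you use the right-handed generating relations $w\to wt$, the conjugated reflection $t'$ becomes unnecessary, since $w_0(wt)=(w_0w)t$ and Proposition \ref{prop.bruhat.reflec} \textbf{(b)} applies directly.
\item The right-multiplication equivalence follows from the left one via $uw_0=(w_0u^{-1})^{-1}$ and Proposition \ref{prop.bruhat.props1} \textbf{(c)}. This avoids \textbf{(f)} and the automorphism $u\mapsto w_0uw_0$, although your reduction is valid and gives that automorphism as a bonus.
\end{itemize}
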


\begin{proof}
\textbf{(a)} This is (contained in) \cite[\S 11.1]{Lusztig} or
\cite[Proposition 2.3.1]{BjoBre} or \cite[Proposition 80]{Schremmer} or
\cite[Lemma 4.6.1]{Davis}.

\textbf{(g)} This is \cite[Lemma 11.3 \textbf{(a)}]{Lusztig} or
\cite[Proposition 2.3.4]{BjoBre}.

\textbf{(b)} This is \cite[Proposition 2.3.2 (i)]{BjoBre} or \cite[Proposition
80 \textbf{(a)}]{Schremmer} or \cite[Lemma 4.6.1 (iii)]{Davis}. (It is also an
immediate consequence of part \textbf{(a)}, since $w_{0}^{-1}$ has the same
length as $w_{0}$.)

\textbf{(c)} This is \cite[Proposition 2.3.1 (i)]{BjoBre} or \cite[Proposition
80 (e)]{Schremmer}. It also follows from parts \textbf{(b)} and \textbf{(g)}
as follows: Part \textbf{(b)} yields $w_{0}^{2}=1$. Now let $u\in W$. Then,
$1\leq w_{0}u$ (since $1\leq v$ for any $v\in W$). Hence, part \textbf{(g)}
(applied to $1$ and $w_{0}u$ instead of $u$ and $v$) yields that $w_{0}1\geq
w_{0}w_{0}u$ and $1w_{0}\geq w_{0}uw_{0}$. In particular, $w_{0}%
1\geq\underbrace{w_{0}w_{0}}_{=w_{0}^{2}=1}u=u$, hence $u\leq w_{0}1=w_{0}$.
This proves part \textbf{(c)}.

\textbf{(d)} This is \cite[\S 11.2 \textbf{(a)} and \textbf{(b)}]{Lusztig} or
\cite[Proposition 80 (c)]{Schremmer} or \cite[Proposition 2.3.2 (ii) and
Corollary 2.3.3 (i)]{BjoBre}.

\textbf{(e)} Part \textbf{(b)} yields $w_{0}^{2}=1$ and $w_{0}^{-1}=w_{0}$.

Let $u\in W$. Then, the first equality of part \textbf{(d)} (applied to
$w_{0}u$ instead of $u$) yields $\ell\left(  w_{0}uw_{0}\right)  =\ell\left(
w_{0}w_{0}u\right)  $. In view of $\underbrace{w_{0}w_{0}}_{=w_{0}^{2}=1}u=u$,
we can rewrite this as $\ell\left(  w_{0}uw_{0}\right)  =\ell\left(  u\right)
$. In other words, $\ell\left(  w_{0}uw_{0}^{-1}\right)  =\ell\left(
u\right)  $ (since $w_{0}^{-1}=w_{0}$). This proves part \textbf{(e)}.

\textbf{(f)} Let $s\in S$. Then, $\ell\left(  s\right)  =1$, so that part
\textbf{(e)} (applied to $u=s$) yields $\ell\left(  w_{0}sw_{0}^{-1}\right)
=\ell\left(  s\right)  =1$. This shows that $w_{0}sw_{0}^{-1}\in S$. Since we
have proved this for each $s\in S$, we thus have shown that $w_{0}Sw_{0}%
^{-1}\subseteq S$.

Since $w_{0}^{-1}=w_{0}$ (by part \textbf{(b)}), we can rewrite this as
$w_{0}^{-1}Sw_{0}\subseteq S$. In other words, $S\subseteq w_{0}Sw_{0}^{-1}$.
Combining this with $w_{0}Sw_{0}^{-1}\subseteq S$, we obtain $w_{0}Sw_{0}%
^{-1}=S$. This proves part \textbf{(f)}.
\end{proof}
\end{noncompile}

\section{Classical coset lemmas}

We shall next prove a sequence of lemmas -- most of them likely well-known --
that will be used in the proofs of our other two results. The first is a
classic (see, e.g., \cite[Lemma 9.7 (b)]{Lusztig}):

\begin{lemma}
\label{len-add} Let $I$ be a subset of $S$. Let $w\in W$. Let $a$ and $b$ be
elements of minimum length in the cosets $wW_{I}$ and $W_{I}w$, respectively. Then:

\begin{enumerate}
\item[\textbf{(a)}] For each $x\in W_{I}$, we have $\ell(xb)=\ell(x)+\ell(b)$.

\item[\textbf{(b)}] For each $y\in W_{I}$, we have $\ell(ay)=\ell(a)+\ell(y)$.
\end{enumerate}
\end{lemma}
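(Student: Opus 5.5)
The plan is to derive both parts from Proposition \ref{prop.double-coset} \textbf{(d)} applied to one-sided cosets, treated as degenerate double cosets with one side trivial. For part \textbf{(a)}, take $P=I$ and $Q=\varnothing$, so that $W_{Q}=W_{\varnothing}=\{1\}$ and $W_{P}wW_{Q}=W_{I}w$. Since $b$ is an element of minimum length in $W_{I}w$, it may play the role of $m$ in Proposition \ref{prop.double-coset}; the proof of that proposition in fact only uses that $m$ has minimum length, and part \textbf{(a)} even guarantees that this element is unique.

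Now let $x\in W_{I}$ and suppose, for contradiction, that $\ell(xb)\neq\ell(x)+\ell(b)$. Writing $xb=xb1$ with $y=1\in W_{\varnothing}$, we get $\ell(xb1)\neq\ell(x)+\ell(b)+\ell(1)$. Proposition \ref{prop.double-coset} \textbf{(d)} then produces $x'\in W_{I}$ and $y'\in W_{\varnothing}$ with $x'<x$ and $y'<1$. But $y'<1$ is impossible, since Proposition \ref{prop.bruhat.props1} \textbf{(b)} would give $\ell(y')<0$. This contradiction yields $\ell(xb)=\ell(x)+\ell(b)$, which is part \textbf{(a)}.

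Part \textbf{(b)} is symmetric. Take $P=\varnothing$ and $Q=I$, so that $W_{P}wW_{Q}=wW_{I}$, with $m=a$, and write $ay=1ay$. If $\ell(ay)\neq\ell(a)+\ell(y)$, then part \textbf{(d)} gives some $x'<1$, which is again impossible. Alternatively, part \textbf{(b)} can be deduced from part \textbf{(a)} by inversion: $a^{-1}$ is of minimum length in $W_{I}w^{-1}$, because inversion preserves length, and $\ell\left((ay)^{-1}\right)=\ell(y^{-1}a^{-1})$.

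There is no real obstacle here; the only point needing care is to check that Proposition \ref{prop.double-coset} \textbf{(d)} is valid when $P$ or $Q$ is empty. It is, because the proof nowhere assumed $P,Q\neq\varnothing$: when $j=0$, the empty reduced expression simply contributes no factors, and Cases 3 and 5 become vacuous.
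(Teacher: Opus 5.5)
Your proposal is correct and matches the paper's proof: you apply Proposition \ref{prop.double-coset} \textbf{(d)} with $P=I$, $Q=\varnothing$, $m=b$, $y=1$ to get the impossible $y'<1$, and argue part \textbf{(b)} symmetrically. Your inversion alternative for \textbf{(b)} and your check that part \textbf{(d)} remains valid when $Q=\varnothing$ are both sound.
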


\begin{proof}
\textbf{(a)} Let $x\in W_{I}$. We must prove that $\ell(xb)=\ell(x)+\ell(b)$.
Assume the contrary. Thus, $\ell(xb)\neq\ell(x)+\ell(b)$. So
\[
\ell(xb1)=\ell\left(  xb\right)  \neq\ell\left(  x\right)  +\ell\left(
b\right)  =\ell(x)+\ell(b)+\ell\left(  1\right)  \ \ \ \ \ \ \ \ \ \ \left(
\text{since }\ell\left(  1\right)  =0\right)  .
\]

Since $b\in W_{I}w$, we have $W_{I}b=W_{I}w=W_{I}wW_{\varnothing}$ (since
$W_{\varnothing}=\left\{  1\right\}  $). Proposition \ref{prop.double-coset}
\textbf{(a)} (applied to $P=I$ and $Q=\varnothing$) says that the double coset
$W_{I}wW_{\varnothing}$ has a unique element of minimum length. This unique
element must be $b$ (since $b$ has minimum length in $W_{I}w=W_{I}%
wW_{\varnothing}$). Since $\ell(xb1)\neq\ell(x)+\ell(b)+\ell\left(  1\right)
$, we can thus apply Proposition \ref{prop.double-coset} \textbf{(d)} to $P=I$
and $Q=\varnothing$ and $m=b$ and $y=1$. This yields that there exist
$x^{\prime}\in W_{I}$ and $y^{\prime}\in W_{\varnothing}$ such that
$x^{\prime}<x$ and $y^{\prime}<1$ and $xb1=x^{\prime}by^{\prime}$. But this is
clearly absurd, since $y^{\prime}<1$ is impossible (no element of $W$ is
smaller than $1$). This contradiction shows that our assumption was false. So
Lemma \ref{len-add} \textbf{(a)} is proved. \medskip

\textbf{(b)} This is similar to part \textbf{(a)}, but with the left and right
sides swapped (i.e., use $wW_{I}=W_{\varnothing}wW_{I}$, and rewrite $ay$ as
$1ay$).
\end{proof}

The following lemma says that any element in a finite parabolic subgroup can
be connected to the longest element of that subgroup by a sequence of
multiplications by simple reflections, each of which increases the length by
$1$.

\begin{lemma}
\label{max-ext}Let $I\subseteq S$ be such that $W_{I}$ is finite. Let $y\in
W_{I}$, and let $w_{I}$ be the longest element of $W_{I}$. Then, for some
integer $r\geq0$, there exist $y_{0},y_{1},\dots,y_{r}\in W_{I}$ and
$i_{0},i_{1},\dots,i_{r-1}\in I$ with $y_{0}=y$ and $y_{r}=w_{I}$, so that
\[
y_{j+1}=y_{j}i_{j}\quad\text{and}\quad\ell(y_{j+1})=\ell(y_{j}%
)+1\ \ \ \ \ \ \ \ \ \ \text{for every }j\in\left\{  0,1,\dots,r-1\right\}  .
\]

\end{lemma}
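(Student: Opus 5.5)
The plan is a greedy construction. Start with $y_0 := y$ and extend step by step. Whenever the current element $y_j \in W_I$ is not $w_I$, we look for some $i \in I$ with $\ell(y_j i) = \ell(y_j) + 1$, and we set $i_j := i$ and $y_{j+1} := y_j i_j$. Then $y_{j+1} \in W_I$, since $i_j \in I$. Each step raises the length by exactly $1$, and all lengths in the finite group $W_I$ are bounded by $\ell(w_I)$. Hence the process terminates after at most $\ell(w_I) - \ell(y)$ steps. It can only stop at an element $y_r$ for which no such $i$ exists.

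The key claim, then, is this: if $z \in W_I$ satisfies $\ell(zi) < \ell(z) + 1$ for every $i \in I$, then $z = w_I$. Since $\ell(zi) = \ell(z) \pm 1$ (the parity of length changes under multiplication by a simple reflection), the hypothesis says $\ell(zi) < \ell(z)$ for every $i \in I$. We must show that such a $z$ has maximum length in $W_I$. Then Proposition \ref{prop.w0-exists}, applied to the finite Coxeter group $W_I$, whose length function is the restriction of $\ell$, gives $z = w_I$.

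To prove the key claim, use Proposition \ref{prop.w0-l} in $W_I$. Write $z = w_I u$ with $u := w_I^{-1} z \in W_I$; it suffices to show $u = 1$. Suppose not, and pick a reduced expression of $u$ whose last letter is some $i \in I$. Then $\ell(ui) = \ell(u) - 1$. Applying Proposition \ref{prop.w0-l} in the form $\ell(w_I v) = \ell(w_I) - \ell(v)$ gives
\[
\ell(zi) = \ell(w_I u i) = \ell(w_I) - \ell(u) + 1 > \ell(w_I) - \ell(u) = \ell(z),
\]
contradicting the hypothesis. Thus $u = 1$ and $z = w_I$.

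The main obstacle is that Proposition \ref{prop.w0-l} is stated as $\ell(uw_0) = \ell(w_0) - \ell(u)$, with $w_0$ on the right, whereas the computation above needs $w_0$ on the left. To bridge this, apply the stated version to $v^{-1}$:
\[
\ell(w_0 v) = \ell\bigl((v^{-1} w_0^{-1})^{-1}\bigr) = \ell(v^{-1} w_0^{-1}).
\]
Since $w_0^{-1}$ also has maximum length, uniqueness in Proposition \ref{prop.w0-exists} gives $w_0^{-1} = w_0$. Together with $\ell(v^{-1}) = \ell(v)$, this yields $\ell(w_0 v) = \ell(w_0) - \ell(v)$.

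Alternatively, one can avoid the left-handed version entirely by writing $z = u w_I$ with $u := z w_I^{-1}$ and working symmetrically. Either way, the remaining work is routine bookkeeping of the sequence and its indices.
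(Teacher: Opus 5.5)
Your proof is correct, but it takes a different route from the paper's.

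The paper does not build the chain greedily. It fixes a reduced expression $y^{-1}w_I = i_0 i_1 \cdots i_{r-1}$ with all $i_k \in I$. It then applies Proposition~\ref{prop.w0-l} exactly as stated (to $u = y^{-1}$) to get $r = \ell(w_I) - \ell(y)$, and sets $y_j := y i_0 i_1 \cdots i_{j-1}$. Each of the $r$ steps changes the length by $\pm 1$, while the total change is $\ell(w_I)-\ell(y) = r$, so every step must be $+1$.

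You instead prove that $w_I$ is the only element of $W_I$ with no right ascent in $I$, and let a greedy ascent run until it stalls. This costs you a termination argument and the left-handed form $\ell(w_I v) = \ell(w_I) - \ell(v)$, hence your detour through $w_I^{-1} = w_I$. In return it shows that the ascent can never get stuck early: every maximal chain of right ascents from $y$ inside $W_I$ ends at $w_I$.

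Your ``main obstacle'' disappears if you keep $w_I$ on the right, as the paper does. If $z \neq w_I$, then $\ell(z^{-1}w_I) = \ell(w_I) - \ell(z) \geq 1$ by the stated proposition. Deleting the first letter $i$ of a reduced expression of $z^{-1}w_I$ gives $\ell\bigl((zi)^{-1}w_I\bigr) = \ell(w_I) - \ell(z) - 1$, i.e.\ $\ell(zi) = \ell(z)+1$, again by the same proposition.

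Your closing ``alternatively'' remark is less routine than it sounds. Writing $z = uw_I$ naturally controls $\ell(iz)$ rather than the $\ell(zi)$ your greedy step needs. Your main argument does not depend on it, though.
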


\begin{proof}
We have $y^{-1}w_{I}\in W_{I}$ (since $y\in W_{I}$ and $w_{I}\in W_{I}$); thus
we can choose a reduced expression
\[
y^{-1}w_{I}=i_{0}i_{1}\cdots i_{r-1}%
\]
for $y^{-1}w_{I}$ with $i_{0},i_{1},\dots,i_{r-1}\in I$. Then, $\ell
(y^{-1}w_{I})=r$ and
\[
w_{I}=yi_{0}i_{1}\cdots i_{r-1}.
\]

By Proposition \ref{prop.w0-l} (applied to $W_{I}$, $w_{I}$ and $y^{-1}$ in
place of $W$, $w_{0}$ and $u$, respectively), we have
\[
\ell(y^{-1}w_{I})=\ell(w_{I})-\ell(y^{-1})=\ell(w_{I})-\ell
(y)\ \ \ \ \ \ \ \ \ \ \left(  \text{since }\ell\left(  y^{-1}\right)
=\ell\left(  y\right)  \right)
\]
and therefore
\[
\ell(w_{I})-\ell(y)=\ell(y^{-1}w_{I})=r.
\]


Next, for all $j\in\left\{  0,1,\ldots,r\right\}  $, set
\[
y_{j}=yi_{0}i_{1}\cdots i_{j-1}\in W_{I}.
\]
In particular, $y_{0}=y$, and $y_{r}=yi_{0}i_{1}\cdots i_{r-1}=w_{I}$. Note
that for all $j\in\left\{  0,1,\dots,r-1\right\}  $, we have $y_{j+1}%
=y_{j}i_{j}$ and therefore $\ell(y_{j+1})=\ell\left(  y_{j}i_{j}\right)
=\ell(y_{j})\pm1$ (since $i_{j}\in I\subseteq S$), thus
\begin{equation}
\ell(y_{j+1})-\ell(y_{j})=\pm1\leq1. \label{yj+1yjineq}%
\end{equation}
But the sum of these inequalities \eqref{yj+1yjineq} over all $j\in\left\{
0,1,\ldots,r-1\right\}  $ is $\ell(y_{r})-\ell(y_{0})\leq r$, which is an
equality because $\ell(y_{r})-\ell(y_{0})=\ell(w_{I})-\ell(y)=r$. Thus, each
of the inequalities \eqref{yj+1yjineq} must have already been an equality.
That is, $\ell(y_{j+1})=\ell(y_{j})+1$ holds for every $j$. So the elements
$i_{j}$ and $y_{j}$ satisfy all required properties.
\end{proof}

\begin{lemma}
\label{lem-sandwich}Let $I\subseteq S$ be such that $W_{I}$ is finite. Let
$a,b\in W$. Let $w_{I}$ be the longest element of $W_{I}$, and let $a_{0}$ and
$b_{0}$ be elements of minimum length in the cosets $aW_{I}$ and $W_{I}b$,
respectively. Then, we have $aW_{I}b=a_{0}W_{I}b_{0}$, and $a_{0}w_{I}b_{0}$
is the Bruhat maximum of this set.
\end{lemma}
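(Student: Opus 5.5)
The plan has two parts: first establish the set equality, then show that $a_{0}w_{I}b_{0}$ dominates every element of the set in Bruhat order.

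\textbf{Set equality.} Since $a_{0}\in aW_{I}$, we have $aW_{I}=a_{0}W_{I}$. Likewise $W_{I}b=W_{I}b_{0}$. Hence
\[
aW_{I}b=a_{0}W_{I}b=a_{0}W_{I}W_{I}b=a_{0}W_{I}W_{I}b_{0}=a_{0}W_{I}b_{0},
\]
using $W_{I}W_{I}=W_{I}$. Clearly $a_{0}w_{I}b_{0}$ lies in this set.

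\textbf{Maximality.} We must show $a_{0}ya b_{0}$—more precisely $a_{0}yb_{0}$—satisfies $a_{0}yb_{0}\leq a_{0}w_{I}b_{0}$ for every $y\in W_{I}$. Apply Lemma \ref{max-ext} to $y$ to get a chain $y=y_{0},y_{1},\ldots,y_{r}=w_{I}$ with $y_{j+1}=y_{j}i_{j}$, $i_{j}\in I$ and $\ell(y_{j+1})=\ell(y_{j})+1$. By transitivity, it suffices to prove $a_{0}y_{j}b_{0}<a_{0}y_{j+1}b_{0}$ for each $j$.

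For this step, apply Lemma \ref{lem.bjobre22} with $x:=a_{0}y_{j}$, $y:=b_{0}$ and $t:=i_{j}$; note that $i_{j}\in S$ is a reflection. This requires two hypotheses.
\begin{enumerate}
\item[\textbf{(1)}] $a_{0}y_{j}<a_{0}y_{j}i_{j}$. By Lemma \ref{len-add} \textbf{(b)}, applied to the coset $aW_{I}$ with minimal element $a_{0}$, we have $\ell(a_{0}y_{j})=\ell(a_{0})+\ell(y_{j})$ and $\ell(a_{0}y_{j+1})=\ell(a_{0})+\ell(y_{j})+1$. So $\ell(a_{0}y_{j})<\ell(a_{0}y_{j}i_{j})$, and Proposition \ref{prop.bruhat.reflec} \textbf{(b)} gives the claim.
\item[\textbf{(2)}] $b_{0}<i_{j}b_{0}$. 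By Lemma \ref{len-add} \textbf{(a)}, applied to $W_{I}b$ with $x=i_{j}$, we get $\ell(i_{j}b_{0})=1+\ell(b_{0})>\ell(b_{0})$. Proposition \ref{prop.bruhat.reflec} \textbf{(a)} then gives the claim.
\end{enumerate}
Lemma \ref{lem.bjobre22} then yields $a_{0}y_{j}b_{0}<a_{0}y_{j}i_{j}b_{0}=a_{0}y_{j+1}b_{0}$.

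Chaining these inequalities gives $a_{0}yb_{0}\leq a_{0}w_{I}b_{0}$ for every $y\in W_{I}$, so $a_{0}w_{I}b_{0}$ is the Bruhat maximum of $aW_{I}b$.

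The step that needs the most care is checking the hypotheses of Lemma \ref{lem.bjobre22}. The key point is choosing $x=a_{0}y_{j}$ rather than trying to control $\ell(a_{0}y_{j}b_{0})$ directly. That choice lets the length additivity from Lemma \ref{len-add} on each side separately supply everything needed, with no condition on the full product.
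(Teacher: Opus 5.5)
Your proof is correct and follows the paper's argument essentially step for step: the same set equality, the same chain from Lemma \ref{max-ext}, and Lemma \ref{lem.bjobre22} applied with $a_0 y_j$, $i_j$, $b_0$ in place of $x$, $t$, $y$, with hypothesis (1) checked exactly as in the paper. The only difference is in (2). There you invoke Lemma \ref{len-add} \textbf{(a)} with $x = i_j$, whereas the paper argues directly that $\ell(i_j b_0) \geq \ell(b_0)$ by minimality of $b_0$, combined with $\ell(i_j b_0) = \ell(b_0) \pm 1$; both are valid.
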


\begin{proof}
First, note that $aW_{I}=a_{0}W_{I}$ (since $a_{0}\in aW_{I}$) and
$W_{I}b=W_{I}b_{0}$ (since $b_{0}\in W_{I}b$). This implies that
\[
aW_{I}b=a_{0}W_{I}b=a_{0}W_{I}b_{0}.
\]

It remains to show that $a_{0}w_{I}b_{0}$ is the Bruhat maximum of this set
$a_{0}W_{I}b_{0}$. Since $a_{0}w_{I}b_{0}\in a_{0}W_{I}b_{0}$, this will be
done once we have proved that all $p\in a_{0}W_{I}b_{0}$ satisfy $p\leq
a_{0}w_{I}b_{0}$.

So let $p\in a_{0}W_{I}b_{0}$. Thus, $p=a_{0}yb_{0}$ for some $y\in W_{I}$.
Consider this $y$.

Choose $r\geq0$ and $y_{0},y_{1},\dots,y_{r}\in W_{I}$ and $i_{0},i_{1}%
,\dots,i_{r-1}\in I$ as in Lemma \ref{max-ext}. Then, $y_{0}=y$ and
$y_{r}=w_{I}$, and%
\[
y_{j+1}=y_{j}i_{j}\quad\text{and}\quad\ell(y_{j+1})=\ell(y_{j}%
)+1\ \ \ \ \ \ \ \ \ \ \text{for every }j\in\left\{  0,1,\dots,r-1\right\}  .
\]

Let $j\in\left\{  0,1,\ldots,r-1\right\}  $. Since $a_{0}$ has minimum length
in $aW_{I}$, we can apply Lemma \ref{len-add} \textbf{(b)} to $a$, $a_{0}$ and
$y_{j}$ in place of $w$, $a$ and $y$; thus we find%
\[
\ell(a_{0}y_{j})=\ell(a_{0})+\ell(y_{j}).
\]
Applying the same lemma to $y_{j+1}$ instead of $y_{j}$, we obtain%
\[
\ell(a_{0}y_{j+1})=\ell(a_{0})+\ell(y_{j+1}).
\]
In view of $y_{j+1}=y_{j}i_{j}$ and $\ell(y_{j+1})=\ell(y_{j})+1$, this
rewrites as
\[
\ell(a_{0}y_{j}i_{j})=\ell(a_{0})+\ell(y_{j})+1>\ell\left(  a_{0}\right)
+\ell\left(  y_{j}\right)  =\ell(a_{0}y_{j}).
\]
That is, $\ell\left(  a_{0}y_{j}\right)  <\ell(a_{0}y_{j}i_{j})$. Since
$i_{j}\in I\subseteq S$ is a reflection, this entails
\begin{equation}
a_{0}y_{j}<a_{0}y_{j}i_{j} \label{pf.lem-sandwich.4}%
\end{equation}
(by Proposition \ref{prop.bruhat.reflec} \textbf{(b)}, applied to $a_{0}y_{j}$
in place of $w$ and $i_{j}$ in place of $t$).

Furthermore, $i_{j}\in I\subseteq W_{I}$, so that $i_{j}b_{0}\in W_{I}%
b_{0}=W_{I}b$ (since $b_{0}\in W_{I}b$). Therefore, $\ell\left(  i_{j}%
b_{0}\right)  \geq\ell\left(  b_{0}\right)  $ (since $b_{0}$ has minimum
length in $W_{I}b$). Moreover, since $i_{j}$ is a simple reflection, we have
$\ell\left(  i_{j}b_{0}\right)  =\ell\left(  b_{0}\right)  \pm1$. In view of
the inequality $\ell\left(  i_{j}b_{0}\right)  \geq\ell\left(  b_{0}\right)
$, this $\pm$ sign must be a $+$ sign, so we obtain $\ell(i_{j}b_{0}%
)=\ell(b_{0})+1>\ell\left(  b_{0}\right)  $. That is, $\ell\left(
b_{0}\right)  <\ell\left(  i_{j}b_{0}\right)  $. Since $i_{j}\in I\subseteq S$
is a reflection, this entails
\begin{equation}
b_{0}<i_{j}b_{0} \label{pf.lem-sandwich.5}%
\end{equation}
(by Proposition \ref{prop.bruhat.reflec} \textbf{(a)}, applied to $b_{0}$ in
place of $w$ and $i_{j}$ in place of $t$).

Now, we can use the inequalities (\ref{pf.lem-sandwich.4}) and
(\ref{pf.lem-sandwich.5}) to apply Lemma \ref{lem.bjobre22} with $a_{0}y_{j}$
in place of $x$, $i_{j}$ in place of $t$, and $b_{0}$ in place of $y$ (since
$i_{j}\in I$ is a reflection), and we obtain
\[
a_{0}y_{j}b_{0}<a_{0}y_{j}i_{j}b_{0}=a_{0}y_{j+1}b_{0}%
\ \ \ \ \ \ \ \ \ \ \left(  \text{since }y_{j}i_{j}=y_{j+1}\right)  .
\]
Since this is true for all $j\in\left\{  0,1,\ldots,r-1\right\}  $, we obtain
\[
a_{0}y_{0}b_{0}<a_{0}y_{1}b_{0}<\cdots<a_{0}y_{r}b_{0},
\]
hence $a_{0}y_{0}b_{0}\leq a_{0}y_{r}b_{0}$ (this includes the case $r=0$, in
which case this is an equality). Since $y_{0}=y$ and $y_{r}=w_{I}$, this
rewrites as $a_{0}yb_{0}\leq a_{0}w_{I}b_{0}$. So $p=a_{0}yb_{0}\leq
a_{0}w_{I}b_{0}$. As explained, this completes the proof. \qedhere

\end{proof}


If $w \in W$ and $s \in S$, then we necessarily have $ws < w$ or $ws > w$
(indeed, $\ell(ws)$ is either $\ell(w) - 1$ or $\ell(w) + 1$; then
Proposition~\ref{prop.bruhat.reflec} \textbf{(b)} yields that we have $ws < w$
in the former case and $ws > w$ in the latter).


\begin{lemma}
\label{max.reflection}

Consider $v,w\in W$ so that $v\leq w$, and let $s\in S$. Then:

\begin{enumerate}
\item[\textbf{(a)}] If $sw<w$, then $sv\leq w$.

\item[\textbf{(b)}] If $ws<w$, then $vs\leq w$.
\end{enumerate}
\end{lemma}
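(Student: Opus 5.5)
Part \textbf{(b)} is the mirror image of part \textbf{(a)} under inversion, so we focus on \textbf{(a)} and derive \textbf{(b)} at the end. For \textbf{(b)}: if $ws<w$, then Proposition \ref{prop.bruhat.props1} \textbf{(d)} gives $sw^{-1}=(ws)^{-1}<w^{-1}$. Also $v^{-1}\leq w^{-1}$ by part \textbf{(c)}. So part \textbf{(a)} yields $sv^{-1}\leq w^{-1}$, and inverting gives $vs\leq w$.

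For \textbf{(a)}, assume $sw<w$. By Proposition \ref{prop.bruhat.props1} \textbf{(b)}, $\ell(sw)<\ell(w)$, so $\ell(sw)=\ell(w)-1$. The key step is to choose a reduced expression for $w$ that begins with $s$. Write $sw=s_{2}s_{3}\cdots s_{m}$ reduced, with $m-1=\ell(sw)$. Then $w=s\,s_{2}\cdots s_{m}$ is an expression of length $m=\ell(w)$, hence reduced. Set $s_{1}:=s$.

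Since $v\leq w$, Proposition \ref{prop.bruhat-redex2}, applied to the reduced expression $s_{1}s_{2}\cdots s_{m}$, lets us write $v=s_{i_{1}}s_{i_{2}}\cdots s_{i_{k}}$ with $i_{1}<\cdots<i_{k}$. There are two cases.
\begin{itemize}
\item If $i_{1}=1$, then $sv=s_{i_{2}}\cdots s_{i_{k}}$, since $s^{2}=1$.
\item If $i_{1}>1$, then $sv=s_{1}s_{i_{1}}\cdots s_{i_{k}}$, because $1<i_{1}$ keeps the indices increasing.
\end{itemize}
In either case $sv$ is a subproduct of the reduced expression $s_{1}\cdots s_{m}$ of $w$, so Proposition \ref{prop.bruhat-redex2} gives $sv\leq w$.

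The only step needing care is that the reduced expression of $w$ can be chosen to start with $s$. This is settled by the length count above together with the fact that Proposition \ref{prop.bruhat-redex2} holds for every reduced expression. No other obstacle is expected.
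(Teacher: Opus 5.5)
Your proof is correct. Part \textbf{(b)} is the same as the paper's: invert, apply \textbf{(a)} to $v^{-1}\leq w^{-1}$ and $sw^{-1}<w^{-1}$, and invert back.

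For part \textbf{(a)}, the paper gives no argument; it simply cites Lusztig's Corollary 2.5 (a), which states that $v\leq w\Longleftrightarrow sv\leq w$ whenever $sw<w$. You instead prove it directly from Proposition \ref{prop.bruhat-redex2}:
\begin{itemize}
\item Prepending $s$ to a reduced word for $sw$ gives a reduced word for $w$ that begins with $s$. You do not even need the fact that $\ell(sw)=\ell(w)\pm1$ here, since the triangle inequality $\ell(w)\leq 1+\ell(sw)$ together with $\ell(sw)<\ell(w)$ already forces $\ell(w)=1+\ell(sw)$.
\item Left multiplication by $s$ then either deletes or inserts this first letter in a subword.
\end{itemize}

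This makes the lemma self-contained, in line with the paper's stated aim of relying only on the subword description of the Bruhat order and the deletion property. Running your argument with $sv$ in place of $v$ (using $s(sv)=v$) also gives the reverse implication, so you recover the full equivalence the paper cites.

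One cosmetic point: when $v=1$ the subword is empty and there is no $i_{1}$. Then $sv=s=s_{1}$ is trivially a subword, so nothing breaks.
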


\begin{proof}
\textbf{(a)} This is well-known. Indeed, assume that $sw<w$. Then,
\cite[Corollary 2.5 (a)]{Lusztig} (applied to $y=v$ and $z=w$) says that
$v\leq w$ is equivalent to $sv\leq w$. Thus, in particular, the former implies
the latter. \medskip






\textbf{(b)} Assume that $ws<w$. Then, Proposition \ref{prop.bruhat.props1}
\textbf{(d)} (applied to $ws$ in place of $u$, and $w$ in place of $v$) yields%
\[
(ws)^{-1}<w^{-1}.
\]
But $s\in S$ is a simple reflection, so it satisfies $s^{-1}=s$. Now, $\left(
ws\right)  ^{-1}=s^{-1}w^{-1}=sw^{-1}$ (since $s^{-1}=s$), so that%
\[
sw^{-1}=(ws)^{-1}<w^{-1}.
\]

Next, recall that $v\leq w$; thus, by Proposition \ref{prop.bruhat.props1}
\textbf{(c)}, we obtain $v^{-1}\leq w^{-1}$. Because of this and
$sw^{-1}<w^{-1}$, we can apply part \textbf{(a)} to $v^{-1}$ and $w^{-1}$
instead of $v$ and $w$; this yields%
\[
sv^{-1}\leq w^{-1}.
\]

We can again apply Proposition \ref{prop.bruhat.props1} \textbf{(c)}, with
$sv^{-1}$ in place of $u$, and $w^{-1}$ in place of $v$, to find that
$(sv^{-1})^{-1}\leq(w^{-1})^{-1}$. Simplifying this using $s^{-1}=s$, we get
that $vs\leq w$.
\end{proof}

\begin{lemma}
\label{inequality.thickening}

Choose $I\subseteq S$ so that $W_{I}$ is finite, and let $v,w\in W$ be such
that $v\leq w$. Note that the cosets $W_{I}w=1W_{I}w$ and $wW_{I}=wW_{I}1$
have Bruhat maxima by Lemma \ref{lem-sandwich}.

\begin{enumerate}
\item[\textbf{(a)}] Let $M$ be the Bruhat maximum of $W_{I}w$. Then, each
$x\in W_{I}$ satisfies $xv\leq M$.

\item[\textbf{(b)}] Let $N$ be the Bruhat maximum of $wW_{I}$. Then, each
$x\in W_{I}$ satisfies $vx\leq N$.
\end{enumerate}
\end{lemma}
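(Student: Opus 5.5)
We prove part \textbf{(a)} directly and then deduce part \textbf{(b)} from it by inversion.

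\textbf{Part (a).} By Lemma \ref{lem-sandwich} (applied with $a=1$, $b=w$), we have $M=a_{0}w_{I}b_{0}$, where $a_{0}$ is the element of minimum length in $1W_{I}=W_{I}$, namely $a_{0}=1$, and $b_{0}$ is the element of minimum length in $W_{I}w$. Thus $M=w_{I}b_{0}$. The key observation is that every $s\in I$ satisfies $sM<M$. Indeed, $sw_{I}\in W_{I}$, so $sM=(sw_{I})b_{0}\in W_{I}w$, and hence $sM\leq M$ because $M$ is the Bruhat maximum of $W_{I}w$. Since $s\neq1$, we have $sM\neq M$, so $sM<M$.

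Now let $x\in W_{I}$, and write $x=s_{1}s_{2}\cdots s_{k}$ with all $s_{i}\in I$. We show by induction on $k$ that $xv\leq M$. For the base case $k=0$, note that $v\leq w\leq M$, since $w\in W_{I}w$. For the induction step, suppose $s_{2}\cdots s_{k}v\leq M$. Because $s_{1}M<M$, Lemma \ref{max.reflection} \textbf{(a)} (applied to $s_{2}\cdots s_{k}v$, $M$ and $s_{1}$ in place of $v$, $w$ and $s$) gives $s_{1}s_{2}\cdots s_{k}v\leq M$. This completes the induction and proves \textbf{(a)}.

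\textbf{Part (b).} This is the mirror image of \textbf{(a)}. The same argument works with right multiplication: one shows $Ns<N$ for all $s\in I$ and then applies Lemma \ref{max.reflection} \textbf{(b)}. Alternatively, one can invert everything. Inversion is an order automorphism by Proposition \ref{prop.bruhat.props1} \textbf{(c)}, and it maps $wW_{I}$ onto $W_{I}w^{-1}$. Hence $N^{-1}$ is the Bruhat maximum of $W_{I}w^{-1}$. We also have $v^{-1}\leq w^{-1}$, and $x^{-1}\in W_{I}$. Part \textbf{(a)} therefore gives $x^{-1}v^{-1}\leq N^{-1}$, and inverting again yields $vx\leq N$.

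The only step needing care is the verification that $sM<M$ for each $s\in I$. It uses only that $M$ is the maximum of a set stable under left multiplication by $W_{I}$, so we do not expect any real obstacle.
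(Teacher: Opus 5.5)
Your proposal is correct and takes essentially the same approach as the paper. You get $sM<M$ for each $s\in I$ from $sM\in W_{I}w$, and then strip the letters of $x$ off the left one at a time using Lemma \ref{max.reflection} \textbf{(a)}. Part \textbf{(b)} follows by the mirror argument, or equally well by your inversion argument via Proposition \ref{prop.bruhat.props1} \textbf{(c)}. The identification $M=w_{I}b_{0}$ is harmless but unnecessary, since $M\in W_{I}w$ alone already gives $sM\in W_{I}w$.
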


\begin{proof}
\textbf{(a)} We induct on the length of $x$. First, if $x=1$, then $xv=v\leq
w\leq M$ (since $w\in W_{I}w$, and $M$ is the Bruhat maximum of $W_{I}w$).

Next, assume that $x\neq1$. Choose $t\in I$ and $x^{\prime}\in W_{I}$ so that
$x=tx^{\prime}$ and $\ell(x^{\prime})=\ell(x)-1$ (namely, let $t$ be the first
letter of a reduced expression for $x$, and let $x^{\prime}$ be the rest of
this expression). By our induction hypothesis, $x^{\prime}v\leq M$. Note that
because $M\in W_{I}w$ and $t\in I\subseteq W_{I}$, we have $tM\in W_{I}%
W_{I}w=W_{I}w$, and thus $tM\leq M$ because $M$ is the Bruhat maximum of
$W_{I}w$. Hence, $tM<M$ (since $t\neq1$ entails $tM\neq M$). Thus, we can
apply Lemma \ref{max.reflection} \textbf{(a)} (with $x^{\prime}v$ in place of
$v$, $M$ in place of $w$, and $t$ in place of $s$) and find that $tx^{\prime
}v\leq M$. Recall that $tx^{\prime}=x$, and so this rewrites as $xv\leq M$.
This completes the induction, thus proving \textbf{(a)}.

\textbf{(b)} This is analogous to part \textbf{(a)}; just read all products
right-to-left (in particular, $W_{I}w$ becomes $wW_{I}$) and apply Lemma
\ref{max.reflection} \textbf{(b)} instead of Lemma \ref{max.reflection}
\textbf{(a)}. (Of course, $t$ should now be the last, not the first, letter of
a reduced expression for $x$.)
\end{proof}

\begin{lemma}
\label{double.thickening}Choose $P,Q\subseteq S$ so that $W_{P}$ and $W_{Q}$
are finite, and consider $r,s\in W$ so that $r\leq s$. Then:

\begin{enumerate}
\item[\textbf{(a)}] The double coset $W_{P}sW_{Q}$ has a Bruhat maximum.

\item[\textbf{(b)}] Let $M$ be this maximum. Then, for every $x\in W_{P}$ and
$z\in W_{Q}$, we have $xrz\leq M$.
\end{enumerate}
\end{lemma}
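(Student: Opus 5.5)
For part \textbf{(a)}, I would reduce to Lemma \ref{lem-sandwich} applied twice. First take $N$ to be the Bruhat maximum of the coset $W_{P}s = W_{P}s W_{\varnothing}$; it exists by Lemma \ref{lem-sandwich}, applied with $I=P$, $a=1$, $b=s$. Then take $M$ to be the Bruhat maximum of $NW_{Q}$, again by Lemma \ref{lem-sandwich}, now with $I=Q$, $a=N$, $b=1$.

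The claim is that $M$ is the Bruhat maximum of $W_{P}sW_{Q}$. We have $M\in NW_{Q}\subseteq W_{P}sW_{Q}$. Now let $p=xsz$ with $x\in W_{P}$ and $z\in W_{Q}$.
\begin{itemize}
\item Since $xs\in W_{P}s$, we get $xs\leq N$.
\item Lemma \ref{inequality.thickening} \textbf{(b)}, applied with $I=Q$, $v=xs$ and $w=N$ (so its coset $wW_{I}$ is $NW_{Q}$), gives $xsz\leq M$.
\end{itemize}
So $M$ dominates the whole double coset, and $W_{P}sW_{Q}$ has a Bruhat maximum, which is then unique.

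For part \textbf{(b)} I would run the same chain starting from $r$ instead of $s$. Let $N$ be as above, so it is the maximum of $W_{P}s$.
\begin{itemize}
\item Since $r\leq s$, Lemma \ref{inequality.thickening} \textbf{(a)} (with $I=P$, $v=r$, $w=s$) gives $xr\leq N$ for every $x\in W_{P}$.
\item Then Lemma \ref{inequality.thickening} \textbf{(b)} (with $I=Q$, $v=xr$, $w=N$) gives $xrz\leq M'$ for every $z\in W_{Q}$, where $M'$ is the maximum of $NW_{Q}$.
\end{itemize}
By the argument for part \textbf{(a)}, $M'$ is the Bruhat maximum of $W_{P}sW_{Q}$, so $M'=M$ by uniqueness of maxima. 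This gives $xrz\leq M$.

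I do not expect a real obstacle here. The one point needing care is that the maximum $M$ in part \textbf{(b)} is defined intrinsically, so I must identify it with the element built in part \textbf{(a)}; uniqueness of Bruhat maxima settles this. I also need to check that the left/right versions of Lemma \ref{inequality.thickening} are applied to the correct sides: part \textbf{(a)} for $W_{I}w$ and part \textbf{(b)} for $wW_{I}$.
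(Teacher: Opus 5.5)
Your proof is correct and is essentially the paper's argument. The paper also applies Lemma \ref{lem-sandwich} twice and then chains both parts of Lemma \ref{inequality.thickening}, only in mirror order: first the maximum $N$ of $sW_{Q}$, then the maximum $M$ of $W_{P}N$, using part \textbf{(b)} of Lemma \ref{inequality.thickening} before part \textbf{(a)}; and it obtains \textbf{(a)} by specializing the chain for \textbf{(b)} to $r=s$ instead of arguing it separately, which is a purely cosmetic difference.
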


\begin{proof}
The coset $sW_{Q}$ can be rewritten as $sW_{Q}1$, and thus has a Bruhat
maximum by Lemma \ref{lem-sandwich} (applied to $I=Q$ and $a=s$ and $b=1$); we
denote this maximum by $N$. Similarly, the coset $W_{P}N$ (which can be
written as $1W_{P}N$) has a Bruhat maximum, which we will call $M$. Note that
$M\in W_{P}N\subseteq W_{P}sW_{Q}$ (since $N\in sW_{Q}$). We shall show that
$M$ is the Bruhat maximum of $W_{P}sW_{Q}$.

Let $x\in W_{P}$ and $z\in W_{Q}$. Because $r\leq s$, we can apply Lemma
\ref{inequality.thickening} \textbf{(b)} with $I=Q$, $v=r$, $w=s$, and $x=z$,
and we find that $rz\leq N$. Consequently, we can apply Lemma
\ref{inequality.thickening} \textbf{(a)} with $I=P$, $v=rz$, and $w=N$, and we
get that
\begin{equation}
xrz\leq M. \label{star2}%
\end{equation}
Note that the definitions of $N$ and $M$ did not depend on $r$. Thus, we can
apply (\ref{star2}) to $r=s$ (since $s\leq s$), and obtain $xsz\leq M$.

So we have shown that $xsz\leq M$ for all $x\in W_{P}$ and $z\in W_{Q}$. In
other words, each element of $W_{P}sW_{Q}$ is $\leq M$. Since $M\in
W_{P}sW_{Q}$, this shows that $M$ is the Bruhat maximum of $W_{P}sW_{Q}$. This
proves part \textbf{(a)}. Furthermore, part \textbf{(b)} now follows from
(\ref{star2}).
\end{proof}

\section{Proof of Theorem \ref{thm.double-interval}}

Next we take aim at Theorem \ref{thm.double-interval}. First we establish a
useful lemma that allows us to \textquotedblleft disassemble\textquotedblright%
\ a Bruhat inequality:

\begin{lemma}
\label{lem.leqexist}Let $p,m,q,w\in W$ be such that $w\leq pmq$. Then, we can
find $p^{\prime},m^{\prime},q^{\prime}\in W$ satisfying $p^{\prime}\leq p$ and
$m^{\prime}\leq m$ and $q^{\prime}\leq q$, and $w=p^{\prime}m^{\prime
}q^{\prime}$.
\end{lemma}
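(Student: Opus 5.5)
We will use the subword characterization of the Bruhat order (Proposition \ref{prop.bruhat-redex2}) applied to a reduced expression of $pmq$. The catch is that concatenating reduced expressions of $p$, $m$, $q$ need not give a reduced expression of $pmq$. So we first trim the three factors until the concatenation becomes reduced, and only then take subwords.

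\emph{Step 1 (reduction to the length-additive case).} Among all triples $(p_1,m_1,q_1)\in W^3$ with $p_1\leq p$, $m_1\leq m$, $q_1\leq q$ and $p_1m_1q_1=pmq$, pick one minimizing $\ell(p_1)+\ell(m_1)+\ell(q_1)$. Such triples exist, since $(p,m,q)$ is one. We claim that $\ell(p_1m_1q_1)=\ell(p_1)+\ell(m_1)+\ell(q_1)$. Suppose not. Concatenate reduced expressions of $p_1$, $m_1$, $q_1$; this word is not reduced, so by the deletion property (Theorem \ref{thm.del}) two letters can be removed without changing the product. Removing a letter from a block yields a subword of that block's reduced expression, so each new factor $p_2,m_2,q_2$ satisfies $p_2\leq p_1\leq p$ (and likewise for the others) by Proposition \ref{prop.bruhat-redex2} and transitivity. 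We still have $p_2m_2q_2=pmq$. Moreover, each factor's length is at most that of the corresponding old factor, and the total length of the words has dropped by $2$. Since lengths are bounded by word lengths, $\ell(p_2)+\ell(m_2)+\ell(q_2)\leq \ell(p_1)+\ell(m_1)+\ell(q_1)-2$. This contradicts minimality.

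\emph{Step 2 (taking subwords).} Now the concatenation $s_1\cdots s_a\,t_1\cdots t_b\,u_1\cdots u_c$ of reduced expressions of $p_1$, $m_1$, $q_1$ is a reduced expression of $pmq$. Since $w\leq pmq$, Proposition \ref{prop.bruhat-redex2} writes $w$ as a subproduct of this expression. Split the chosen indices into those in the $s$-block, the $t$-block and the $u$-block, and let $p',m',q'$ be the corresponding subproducts. Then $w=p'm'q'$. Also $p'$ is a subproduct of the reduced expression $s_1\cdots s_a$ of $p_1$, so $p'\leq p_1\leq p$, and similarly $m'\leq m$ and $q'\leq q$.

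The only delicate point is Step 1. One must check that after a deletion the new factors remain below the original $p,m,q$, which follows from transitivity. One must also check that the minimized quantity genuinely decreases, which is why we minimize the sum of lengths rather than a single length. Everything else is a direct application of Proposition \ref{prop.bruhat-redex2}.
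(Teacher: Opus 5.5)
Your proof is correct and follows essentially the same route as the paper: use the deletion property to trim the concatenation of reduced expressions of $p$, $m$, $q$ until it becomes a reduced expression of $pmq$ whose three blocks come from the original factors, then write $w$ as a subword of it and split that subword into the three blocks. The only difference is packaging. The paper deletes letters from one fixed word, so $p'$ is literally a subproduct of the original reduced expression of $p$ and no transitivity is needed. You phrase the iteration as a minimality argument over triples and re-choose reduced expressions of the intermediate factors, so you need transitivity of $\leq$; this is equally valid.
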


\begin{proof}
Let $p_{1}\cdots p_{i}$ and $m_{1}\cdots m_{j}$ and $q_{1}\cdots q_{k}$ be
reduced expressions for $p$ and $m$ and $q$, respectively. Then,
\begin{equation}
pmq=p_{1}\cdots p_{i}m_{1}\cdots m_{j}q_{1}\cdots q_{k}
\label{pf.lem.leqexist.0}%
\end{equation}
is not necessarily a reduced expression. If it is not, then, by the deletion
property (Theorem~\ref{thm.del}), we can delete two factors from it without
changing the product. By applying this repeatedly, we eventually arrive at a
reduced expression of $pmq$ that has the form
\begin{equation}
pmq=p_{u_{1}}\cdots p_{u_{x}}m_{v_{1}}\cdots m_{v_{y}}q_{w_{1}}\cdots
q_{w_{z}} \label{pf.lem.leqexist.1}%
\end{equation}
for some $1\leq u_{1}<\cdots<u_{x}\leq i$ and $1\leq v_{1}<\cdots<v_{y}\leq j$
and $1\leq w_{1}<\cdots<w_{z}\leq k$. Consider this expression, and set
$p^{\ast}=p_{u_{1}}\cdots p_{u_{x}}$ and $m^{\ast}=m_{v_{1}}\cdots m_{v_{y}}$
and $q^{\ast}=q_{w_{1}}\cdots q_{w_{z}}$. Then, $pmq=p^{\ast}m^{\ast}q^{\ast}$
(by (\ref{pf.lem.leqexist.1})) and $p^{\ast}\leq p$ (since $p^{\ast}=p_{u_{1}%
}\cdots p_{u_{x}}$ is a subproduct of the reduced expression $p_{1}\cdots
p_{i}$ of $p$) and likewise $m^{\ast}\leq m$ and $q^{\ast}\leq q$.

Since $w\leq pmq$, we can write $w$ as a subproduct of the reduced expression
\[
p_{u_{1}}\cdots p_{u_{x}}m_{v_{1}}\cdots m_{v_{y}}q_{w_{1}}\cdots q_{w_{z}}%
\]
of $pmq$ (by Proposition \ref{prop.bruhat-redex2}). That is, we can write $w$
as
\begin{equation}
w=p_{u_{1}^{\prime}}\cdots p_{u_{a}^{\prime}}m_{v_{1}^{\prime}}\cdots
m_{v_{b}^{\prime}}q_{w_{1}^{\prime}}\cdots q_{w_{c}^{\prime}},
\label{pf.lem.leqexist.5}%
\end{equation}
where $p_{u_{1}^{\prime}}\cdots p_{u_{a}^{\prime}}$ is a subproduct of
$p_{u_{1}}\cdots p_{u_{x}}$, where $m_{v_{1}^{\prime}}\cdots m_{v_{b}^{\prime
}}$ is a subproduct of $m_{v_{1}}\cdots m_{v_{y}}$, and where $q_{w_{1}%
^{\prime}}\cdots q_{w_{c}^{\prime}}$ is a subproduct of $q_{w_{1}}\cdots
q_{w_{z}}$.

Set $p^{\prime}:=p_{u_{1}^{\prime}}\cdots p_{u_{a}^{\prime}}$ and $m^{\prime
}:=m_{v_{1}^{\prime}}\cdots m_{v_{b}^{\prime}}$ and $q^{\prime}:=q_{w_{1}%
^{\prime}}\cdots q_{w_{c}^{\prime}}$. Note that $p^{\prime}=p_{u_{1}^{\prime}%
}\cdots p_{u_{a}^{\prime}}$ is a subproduct of $p_{u_{1}}\cdots p_{u_{x}}$,
which in turn is a subproduct of $p_{1}\cdots p_{i}$, which is a reduced
expression of $p$. Thus, $p^{\prime}$ is a subproduct of a reduced expression
of $p$. Hence, by Proposition \ref{prop.bruhat-redex2}, we have $p^{\prime
}\leq p$; similarly, $m^{\prime}\leq m$ and $q^{\prime}\leq q$. Moreover,
\eqref{pf.lem.leqexist.5} rewrites as $w=p^{\prime}m^{\prime}q^{\prime}$ (by
the definitions of $p^{\prime},m^{\prime},q^{\prime}$). So we have found the
desired $p^{\prime},m^{\prime},q^{\prime}$. Lemma \ref{lem.leqexist} is thus proved.
\end{proof}

Now we are ready to prove Theorem \ref{thm.double-interval}:

\begin{proof}
[Proof of Theorem \ref{thm.double-interval}.]Proposition
\ref{prop.double-coset} \textbf{(c)} yields that the double coset $W_{I}%
wW_{J}$ has a Bruhat minimum. Let $m$ be this minimum.

By Lemma~\ref{double.thickening} \textbf{(a)} (applied to $P=I$, $Q=J$, $s=w$
and $r=w$), we furthermore know that the double coset $W_{I}wW_{J}$ has a
Bruhat maximum. Let $M$ be this maximum.




We shall now show that $W_{I}wW_{J}$ is the interval $\left[  m,M\right]  $ in
the Bruhat order.

\begin{noncompile}
(All inequalities in this proof are understood with respect to the Bruhat order.)
\end{noncompile}

Note that $W_{I}MW_{J}=W_{I}wW_{J}$ (since $M\in W_{I}wW_{J}$) and
$W_{I}mW_{J}=W_{I}wW_{J}$ (likewise).

Each $x\in W_{I}wW_{J}$ satisfies $m\leq x$ (since $m$ is the Bruhat minimum
of $W_{I}wW_{J}$) and $x\leq M$ (since $M$ is the Bruhat maximum of
$W_{I}wW_{J}$), thus $x\in\left[  m,M\right]  $. In other words,
\begin{equation}
W_{I}wW_{J}\subseteq\left[  m,M \right]  . \label{pf.thm.double-interval.easy}%
\end{equation}
We shall now prove the reverse inclusion.

Let $u\in\left[  m,M\right]  $. Then, $u\leq M$. But $M\in W_{I}wW_{J}%
=W_{I}mW_{J}$, so we can write $M$ as $pmq$ with $p\in W_{I}$ and $q\in W_{J}%
$. Consider these $p$ and $q$. Then, $u\leq M=pmq$. Hence, by Lemma
\ref{lem.leqexist} (applied to $u$ instead of $w$), we can find $p^{\prime
},m^{\prime},q^{\prime}\in W$ satisfying $p^{\prime}\leq p$ and $m^{\prime
}\leq m$ and $q^{\prime}\leq q$ and $u=p^{\prime}m^{\prime}q^{\prime}$.
Consider these $p^{\prime},m^{\prime},q^{\prime}$.

Since $p\in W_{I}$, there is a reduced expression for $p$ using only
reflections in $I$. Since $p^{\prime}\leq p$, we can write $p^{\prime}$ as a
subproduct of this expression. Thus, $p^{\prime}$ is a product of reflections
in $I$, whence $p^{\prime}\in W_{I}$. Similarly, from $q^{\prime}\leq q$ and
$q\in W_{J}$, we obtain $q^{\prime}\in W_{J}$.

Next, from $u\in\left[  m,M\right]  $, we obtain $m\leq u=p^{\prime}m^{\prime
}q^{\prime}$. Hence, Lemma \ref{lem.leqexist} (applied to $p^{\prime}$,
$m^{\prime}$, $q^{\prime}$ and $m$ instead of $p$, $m$, $q$ and $w$) shows
that we can find $p^{\prime\prime},m^{\prime\prime},q^{\prime\prime}\in W$
satisfying $p^{\prime\prime}\leq p^{\prime}$ and $m^{\prime\prime}\leq
m^{\prime}$ and $q^{\prime\prime}\leq q^{\prime}$ and $m=p^{\prime\prime
}m^{\prime\prime}q^{\prime\prime}$. Consider these $p^{\prime\prime}%
,m^{\prime\prime},q^{\prime\prime}$.

Previously we obtained $p^{\prime}\in W_{I}$ from $p\in W_{I}$ and $p^{\prime
}\leq p$. Likewise, we can now obtain $p^{\prime\prime}\in W_{I}$ from
$p^{\prime}\in W_{I}$ and $p^{\prime\prime}\leq p^{\prime}$. Similarly,
$q^{\prime\prime}\in W_{J}$. Hence, $m=\underbrace{p^{\prime\prime}}_{\in
W_{I}}m^{\prime\prime}\underbrace{q^{\prime\prime}}_{\in W_{J}}\in
W_{I}m^{\prime\prime}W_{J}$, so that $W_{I}mW_{J}=W_{I}m^{\prime\prime}W_{J}$
and therefore $m^{\prime\prime}\in W_{I}mW_{J}=W_{I}wW_{J}$. Since $m$ is the
Bruhat minimum of $W_{I}wW_{J}$, this entails $m\leq m^{\prime\prime}$. We
have thus found a circular chain of Bruhat inequalities:%
\[
m\leq m^{\prime\prime}\leq m^{\prime}\leq m.
\]
Thus, all $\leq$ signs must be equalities; in particular, $m^{\prime}=m$. So
\[
u=\underbrace{p^{\prime}}_{\in W_{I}}\underbrace{m^{\prime}}_{=m}%
\underbrace{q^{\prime}}_{\in W_{J}}\in W_{I}mW_{J}=W_{I}wW_{J}.
\]

So we have proved that each $u\in\left[  m,M\right]  $ satisfies $u\in
W_{I}wW_{J}$. That is, $\left[  m,M\right]  \subseteq W_{I}wW_{J}$. Combining
this with (\ref{pf.thm.double-interval.easy}), we conclude that $W_{I}%
wW_{J}=\left[  m,M\right]  $; in particular, $W_{I}wW_{J}$ is a Bruhat interval.
\end{proof}

\section{Proof of Theorem \ref{thm.triplemax}}

At last, we prove Theorem \ref{thm.triplemax}.

\begin{proof}
[Proof of Theorem \ref{thm.triplemax}]
First, since $W_{J}$ is finite, by Lemma \ref{lem-sandwich}, the set $uW_{J}v$
has a Bruhat maximum, which we will call $n$.

Next, consider the double coset $W_{I}nW_{K}$. By Lemma
\ref{double.thickening} \textbf{(a)} (applied to $P=I$, $Q=K$, $r=n$ and
$s=n$), the double coset $W_{I}nW_{K}$ has a Bruhat maximum, which we will
call $M$. We claim that this $M$ is the Bruhat maximum of $W_{I}uW_{J}vW_{K}$.

First note that $M\in W_{I}nW_{K}\subseteq W_{I}uW_{J}vW_{K}$, since $n \in
uW_{J}v$.

Next, let $p\in W_{I}uW_{J}vW_{K}$. Write $p$ as $p=xuyvz$ for some $x\in
W_{I}$, $y\in W_{J}$, and $z\in W_{K}$. Then, $uyv\in uW_{J}v$, so that
$uyv\leq n$ (since $n$ is the Bruhat maximum of $uW_{J}v$). Use this to apply
Lemma \ref{double.thickening} \textbf{(b)} with $r=uyv$, $s=n$, $P=I$, and
$Q=K$. Thus, we obtain
\[
p=xuyvz\leq M.
\]
Since this is true for all $p\in W_{I}uW_{J}vW_{K}$, we conclude that $M$ is
the Bruhat maximum of $W_{I}uW_{J}vW_{K}$ (since $M\in W_{I}uW_{J}vW_{K}$).
\end{proof}

\subsubsection*{Acknowledgments}

We thank Hankyung Ko for conversations on double cosets at ICERM (2025), and
Elizabeth Milicevic and Mark Skandera for further helpful discussions. Various
versions of GPT were used to generate parts of the proofs of Theorems
\ref{thm.double-interval} and \ref{thm.triplemax}. None of the writing is
AI-made, but everything was proofread by GPT (and by us as well).

\begin{noncompile}
Howard Hiller, \textit{Geometry of Coxeter groups}, Pitman Advanced Publishing
Program 1982.
\end{noncompile}

\end{document}